\definecolor{second_color}{rgb}{0,0,.3}
\title{\LARGE \bf EMPIRICAL CENTROID FICTITIOUS PLAY: AN APPROACH FOR DISTRIBUTED LEARNING IN MULTI-AGENT GAMES}
\author{BRIAN SWENSON$^{\dagger*}$, SOUMMYA KAR$^\dagger$ AND JO\~{A}O XAVIER$^\star$\thanks{The work was partially supported by the FCT project [PEst-OE/EEI/LA0009/2011] through the Carnegie Mellon/Portugal Program managed by ICTI from FCT and by FCT Grant FCT PTDC/EMS-CRO/2042/2012. \newline$^\dagger$Department of Electrical and Computer Engineering, Carnegie Mellon University, Pittsburgh, PA 15213, USA (brianswe@andrew.cmu.edu and soummyak@andrew.cmu.edu).\newline $^\star$Institute for Systems and Robotics (ISR), Instituto Superior Tecnico (IST), Technical University of Lisbon, Portugal (jxavier@isr.ist.utl.pt).}}
\newcommand{\ones}{{\bf{1}}}
\newtheorem{theorem}{Theorem}
\newtheorem{lemma}{Lemma}
\newtheorem{assumption}{A.}
\newtheorem*{remark}{Remark}
\begin{document}

\maketitle
\thispagestyle{empty}

%%%%%%%%%%%%%%%%%%%%%%%%%%%%%%%%%%%%%%%%%%%%%%%%%%%%%%%%%%%%%%%%%%%%%%%%%%%%%%%%
\begin{abstract}
The paper is concerned with distributed learning in large-scale games. The well-known fictitious play (FP) algorithm is addressed, which, despite theoretical convergence results, might be impractical to implement in large-scale settings due to intense computation and communication requirements.  An adaptation of the FP algorithm, designated as the empirical centroid fictitious play (ECFP), is presented. In ECFP players respond to the centroid of all players' actions rather than track and respond to the individual actions of every player. Convergence of the ECFP algorithm in terms of average empirical frequency (a notion made precise in the paper) to a subset of the Nash equilibria is proven under the assumption that the game is a potential game with permutation invariant potential function. A more general formulation of ECFP is then given (which subsumes FP as a special case) and convergence results are given for the class of potential games.
Furthermore, a distributed formulation of the ECFP algorithm is presented, in which, players endowed with a (possibly sparse) preassigned communication graph, engage in local, non-strategic information exchange to eventually agree on a common equilibrium. Convergence results are proven for the distributed ECFP algorithm.
\end{abstract}

\begin{IEEEkeywords}
Games, Distributed Learning, Fictitious Play, Nash Equilibria, Consensus
\end{IEEEkeywords}

\section{Introduction}
The theory of learning in games is concerned with the study of dynamical systems induced by repeated play of a normal-form game. The general question of interest is, can player behavior rules be assigned that ensure players eventually learn a Nash equilibrium (NE) strategy? In this paper, we address the more focused question: can player behavior rules be assigned that ensure players learn a NE strategy \emph{and are practical in games with a large number of players}?

In particular, we focus on the well-known fictitious play (FP) algorithm. While FP does not converge\footnote{In reference to FP or one of its variants, we use the term convergence to mean that the empirical frequency distribution of a FP process asymptotically converges to the set of Nash equilibria, a notion to be made precise in Section \ref{sec2}. } in all games \cite{shapley1964,jordan1993,foster1998nonconvergence}, it has been proven that FP converges in games with an arbitrarily large number of players under the assumption that the game is a potential game \cite{Mond01,Mond96}. This theoretically promising result suggests that FP might be an ideal algorithm for some large-scale settings; however, the prohibitively demanding communication and computational requirements of the algorithm make any large-scale implementation highly challenging, if not impractical. In particular, it is to be observed that FP, in its classical form, may not be practical for implementation in large-scale games because of the following problems:
\begin{enumerate}
    \item demanding communication requirements,
    \item demanding memory requirements,
    \item high computational complexity.
\end{enumerate}
We present an adaptation of FP which mitigates these problems, and by these criteria, is well suited to large-scale games.

The traditional FP implementation and its variants assume that each player has instantaneous access to the time-varying action histories of all other players. Moreover, such information access or gathering is assumed to be free, which makes these implementations infeasible in practical large network scenarios. In fact, in such large networks, each player may directly observe the actions of a few neighboring players only, whereas the actions of the others need to be communicated \emph{efficiently} through an underlying communication infrastructure. In order to explicitly account for the communication costs that are to be incurred in the information gathering/dissemination process, and thereby address problem (1) of demanding communication requirements of traditional FP, we assume a problem framework where players are permitted to communicate via a preassigned (sparse but connected) communication graph $G = (V,E)$. In this framework, a node in the graph represents a player, and an edge in the graph signifies the ability for players to exchange information.
%Communication costs are taken into account in the following manner: the communication cost from player $i$ to $j$ is zero if there is an edge, $(i, j) \in E$, and infinite otherwise.
Furthermore, in our setup we assume that communication and the repeated game play evolve according to the same discrete time clock and that players are permitted only one round of communication per round of game play.

We refer to the classical setting, where players are assumed to have instantaneous access to the action histories of other players, as a \emph{centralized-information} setting. We refer to the novel setting presented in this paper, where players have a limited ability to track the actions of others but may communicate with a local subset of neighboring players via some underlying communication structure, as a \emph{distributed-information} setting. We also note, in this context, in the majority of game theory literature, a graph structure denotes the ability of a player to observe the actions of a neighbor \cite{gale2003bayesian,eksinlearning,Shamma03,rosenberg2009informational}, not to exchange information, as in our approach.

In order to address problems (2) and (3), we propose a new variant of FP wherein players respond to the centroid of the marginal empirical distributions (the centroid distribution) rather than track and respond to the entire tuple of independent marginal distributions.  We call this algorithm empirical centroid fictitious play (ECFP).  The advantages of this approach are a mitigation of the FP complexity problem (see Section \ref{sec_BR_computation}) enabled by a computationally simplified best response rule (thereby mitigating (3)), and a mitigation of the FP memory problem by requiring players to track only a single distribution (the centroid distribution) which is invariant to the number of players in the game (thereby mitigating (2)). ECFP may be implemented in both the classical centralized-information setting or the distributed-information setting that we introduce in this paper.

Our main contributions are twofold: \newline
\textbf{Main Contribution 1: We present empirical centroid fictitious play (ECFP)---an adaptation of FP that mitigates the memory and computational demands of classical FP.} ECFP addresses problems (2)--(3) associated with FP in large-scale games. We show that ECFP converges in terms of {\it{average}} empirical distribution to a subset of the mixed strategy Nash equilibria, which we call the consensus equilibria. Convergence results are proven for games with identical permutation-invariant utility functions and can be extended to the larger class of games known as potential games \cite{Mond96}, with the restriction that the potential function be permutation invariant.

In section \ref{sec6} a generalized formulation of ECFP is given. In the generalized formulation the set of players is partitioned into classes, and players track multiple centroids---one centroid for each class. The formulation allows some of the initial assumptions regarding ECFP to be relaxed. Moreover, classical FP occurs as a special case of ECFP in this formulation.

A significant issue with many learning algorithms in large-scale games is the rate of convergence in terms of the number of players. While we do not present a formal analysis of the convergence rate of ECFP in this paper, we note that experimental results and illustrative case studies (see Section VIII) suggest that the convergence rate of ECFP (in its basic formulation) tends to be invariant to the number of players.

\textbf{Main Contribution 2: We present distributed ECFP---an implementation of ECFP in which agent policy update depends only on local neighborhood information exchange.} The presented algorithm is an implementation of ECFP within our distributed-information framework. It addresses all three problems (1)--(3) associated with FP in large games. We prove convergence of the algorithm to the set of consensus equilibria.  This convergence guarantees that each agent obtains an accurate estimate of the limiting equilibrium strategy.

\subsection{Related Work}
An overview of the subject of learning in games is found in \cite{fudenberg1998theory}. Many large-scale learning algorithms exist that are not based on FP, including no-regret algorithms \cite{jafari2001no,marden2007regret}, aspiration learning \cite{chasparis2010aspiration}, and other model-free approaches \cite{marden-young-2011,pradelski2012learning,marden-shamma-08}. These learning algorithms tend to be fundamentally different than FP in that they do not track past actions of other players.

Variants of FP have been proposed for two player games \cite{fudenberg1995consistency,leslie2006generalised,washburn2001new,fudenberg1998theory} and are generally aimed at improving various aspects of the two player algorithm (i.e., faster convergence, convergence in specific games, etc.).

Sampled FP \cite{Lambert01} addresses the problem of computational complexity of FP in large-scale games by using a Monte Carlo method to estimate the best response.  Although computationally simple in the initial steps of the algorithm, the number of samples required to ensure convergence grows without bound.

Dynamic FP \cite{Shamma03} applies principles of dynamic feedback from control theory to improve the convergence properties of a continuous-time version of FP. The algorithm is shown to be stable around some Nash equilibria where traditional FP is unstable. While the results generalize to multi-player games, there is no mitigation of the information gathering problem.  In \cite{Arslan04}, a similar algorithm utilizing only payoff-based dynamics is presented. Similar stability results are shown when the class of games is restricted to games with a pairwise utility structure.

Joint strategy FP \cite{marden06} is shown to converge for generalized ordinal potential games. Players track the utility each of their actions would have generated in the previous round, and then use a simple recursion to update the predicted utility for each action in the subsequent round. Actions are chosen by maximizing the predicted utility. In joint strategy FP, the information tracking problem is mitigated by requiring agents to track only the information germane to the computation of the predicted utility for actions of interest. No information gathering scheme is explicitly defined; players are assumed to have full access to the necessary information at all times. In distributed ECFP, proposed in this paper, the information gathering scheme is explicitly defined via a preassigned (but arbitrary) communication graph, and convergence results are demonstrated when inter-agent communication is restricted to local neighborhoods conformant to the graph.

Na and Marden \cite{na-marden-2011b} present a systematic methodology for designing local-agent utility functions such that the NE of the designed game coincide with optimizers of a prespecified objective function; moreover, the agent utility functions may be designed to achieve a desired degree of `locality', in the sense that the designed utility functions depend only on information from a set of local neighboring agents. The desired degree of locality is achieved by augmenting agents' action space with a set of state space variables consisting of a value and an estimate of the value of each opponent. Although the context is quite different from distributed ECFP (designing the utility structure vs. designing learning dynamics), both works seek to address the communication problem in large games by restricting information exchange to a local neighborhood of each player. However, in \cite{na-marden-2011b} agents estimate the `value' held by each of the other $n-1$ players, hence the memory size of the messages that must be passed grows linearly with the size of the game, as opposed to distributed ECFP in which the memory size of messages is invariant to $n$.

In payoff based approaches (e.g., \cite{marden2009payoff}, \cite{leslie2006generalised}) it is assumed that players measure the instantaneous payoff information and base future action choices off this information alone. Payoff based approaches apply when instantaneous payoff information is available, and are generally very effective at mitigating communication, memory, and complexity requirements. However, there are scenarios in which, even if the utility structure is available, the instantaneous payoffs are not---in such cases ECFP type approaches are applicable.

The work \cite{koshal2012gossip}, studies the problem of learning NE in a continuous kernel aggregative game by sharing information through an overlaid communication graph. The notion of using a communication graph to estimate the aggregate behavior is similar to distributed ECFP where players use the graph to estimate the empirical centroid. However, in \cite{koshal2012gossip} the underlying game is fundamentally different (continuous kernel), and the communication scheme is based on asynchronous gossip. In this context, see also \cite{Swenson-MFP-Asilomar-2012}, our preliminary work on ECFP, which introduces the concept of graph-theoretic aggregation of information in repeated play settings of the type considered in this paper.

The remainder of the paper is organized as follows:  Section \ref{sec2} sets up notation to be used in the subsequent development and introduces the notion of consensus equilibria. In Section \ref{sec2A} the classical FP algorithm is reviewed. Section \ref{sec3} introduces ECFP as a low-information-overhead, repeated-play alternative to FP for learning consensus equilibria in multi-agent games.  Section \ref{sec4} presents the distributed-information learning framework. A fully distributed implementation of the proposed ECFP, the distributed ECFP, in multi-agent scenarios in which agent information dynamics is restricted to communication over a preassigned sparse communication network is presented and analyzed in section \ref{sec5}.  In section \ref{sec6} we discuss the generalized formulation of ECFP, and present generalized convergence results.  In section \ref{sec7} we demonstrate an application of distributed ECFP in a cognitive radio scenario with a view to illustrating the analytical concepts developed in the paper.  Finally, section \ref{sec8} concludes the paper.

\section{Preliminaries}
\label{sec2}
\subsection{Game Theoretic Setup}
A normal form game is given by the triple $\Gamma = (N,(Y_i)_{i\in N},(u_i(\cdot))_{i\in N})$, where $N = \{1,\ldots,n\}$ represents the set of players, $Y_i$---a finite set of cardinality $m_i$---denotes the action space of player $i$ and $u_i(\cdot):\prod_{i=1}^n Y_i \rightarrow \mathbb{R}$ represents the utility function of player $i$.

In order to guarantee the existence of NE and work in an overall richer framework, we consider the mixed extension of $\Gamma$ in which players may use probabilistic strategies, and players' payoffs are extended to account for such strategies.

The set of mixed strategies for player $i$ is given by $\Delta_i = \{p \in \mathbb{R}^{m_i}:\sum_{k=1}^{m_i} p(k) = 1,~p(k) \geq 0~\forall k=1,\ldots,m_i \}$, the $m_i$-simplex.   A mixed strategy $p_i \in \Delta_i$ may be thought of as a probability distribution from which player $i$ samples to choose an action. The set of joint mixed strategies is given by $\Delta^n = \prod_{i=1}^n \Delta_i$. A joint mixed strategy is represented by the $n$-tuple $\left(p_1,p_2,\ldots,p_n \right)$, where $p_i\in \Delta_i$ represents the marginal strategy of player $i$, and it is implicity assumed that players' strategies are independent.

A pure strategy is a degenerate mixed strategy which places probability $1$ on a single action in $Y_i$. We denote the set of pure strategies by $A_i = \{e_1,e_2,\ldots e_{m_i}  \}$ where $m_i$ is the number of actions available to player $i$, and $e_j$ is the $j$th canonical vector in $\mathbb{R}^{m_i}$. The set of joint pure strategies is given by $A^n = \prod_{i=1}^n A_i$.

The mixed utility function for player $i$ is given by the function $U_i(\cdot):\Delta^n \rightarrow \mathbb{R}$, such that,
\begin{equation}
U_i(p_1,\ldots,p_n) := \sum\limits_{y \in Y} u_i(y)p_1(y_1)\ldots p_n(y_n).
\label{mixed_U}
\end{equation}
Note that $U_i(\cdot)$ may be interpreted as the expected value of $u_i(y)$ given that the players' mixed strategies are statistically independent. For convenience the notation $U_{i}(p)$ will often be written as $U_{i}(p_i,p_{-i})$, where $p_i \in \Delta_i$ is the mixed strategy for player $i$, and $p_{-i}$ indicates the joint mixed strategy for all players other than $i$.  This paper will often deal with games with identical utility functions such that $U_i(p) = U_j(p)\; \forall i,j$; in such cases we drop the subscript on the utility of player $i$ and write $U(p) = U_i(p)\;\forall i$.

The set of Nash equilibria of $\Gamma$ is given by $NE := \{p\in \Delta^n :U_{i}(p) \geq U_{i}(g_i,p_{-i})\; \forall g_i \in \Delta_i, ~\forall i \},$
and the subset of consensus equilibria\footnote{The concept of a consensus equilibrium is closely related to that of a symmetric equilibrium.  The existence of symmetric equilibrium in finite normal form games was first proven by Nash \cite{Nash51} in the same work where the concept of Nash equilibrium was originally presented.  In general, a symmetric equilibrium is a Nash equilibrium that is invariant under automorphisms of the game.  A consensus equilibrium, on the other hand, is a Nash equilibrium in which all players use an identical strategy.  In the case of a symmetric game, the two concepts coincide.} as $ \nonumber C := \{p \in NE~:~  p_1 = p_2 = \cdots = p_n \}.$ The set of $\varepsilon$-Nash equilibria is given by
\begin{equation}
NE_\varepsilon := \{p \in \Delta^{n}~:~ U_{i}(p) \geq U_{i}(g_i,p_{-i})-\varepsilon,\; \forall g_i \in \Delta_i, ~\forall i\},
\label{epsilon_eqilib_eq}
\end{equation}
and the set of $\varepsilon$-consensus equilibria as
\begin{align}
\nonumber C_{\varepsilon} := \{p\in NE_{\varepsilon}: ~p_1 = p_2 = \cdots = p_n \}.
%\label{epsilon_C_eqilib}
\end{align}

The distance of a distribution $p \in \Delta^n$ from a set $S \subset \Delta^n$ is given by $d(p,S) = \inf \{ \| p - p' \| : p'\in S\}$. Throughout the paper $\|\cdot\|$ denotes the standard $\mathcal{L}_{2}$ Euclidean norm unless otherwise specified.  For $\delta > 0$ we denote the set $B_\delta(C) = \{p\in\Delta^n:p_1 = p_2= \ldots =p_n \mbox{ and } d(p,C)<\delta  \}.$

In what follows (with the exception of Section VII where we pursue generalizations), we will restrict attention to games with identical permutation-invariant utilities; formally, we assume:
\begin{assumption}
\label{ass1}
All players use the same strategy space.
\end{assumption}
\begin{assumption}
\label{ass2}
The players' utility functions are identical\footnote{Games where players have identical utility functions are referred to as `identical interests games' \cite{Mond01}. They are closely related to potential games \cite{Mond96} and have many useful applications in both engineering and economics \cite{Voor00,Rose73,na-marden-2011b,marden-connections}. } and permutation invariant. That is, for any $i,j\in N$, $u_i(y) = u_j(y)$, and $u([y']_i,[y'']_j,y_{-(i,j)}) = u([y'']_i,[y']_j,y_{-(i,j)}),$
where, for any player $k\in N$, the notation $[y']_i$ indicates the action $y'\in Y_k$ being played by player $k$, and $y_{-(i,j)}$ denotes the set of actions being played by all players other than $i$ and $j$.
\label{a2}
\end{assumption}
Note that because of assumption \textbf{A.\ref{ass1}}, permutations of the form given in \textbf{A.\ref{ass2}} are necessarily well defined. Also note that under these assumptions, the set of consensus equilibria is known to be nonempty \cite{Cheng04}.

% Please add the following required packages to your document preamble:
% \usepackage[table,xcdraw]{xcolor}
% If you use beamer only pass "xcolor=table" option, i.e. \documentclass[xcolor=table]{beamer}

\subsection{Repeated Play}
In a repeated-play learning algorithm, players repeatedly face off in a fixed game $\Gamma$. An algorithm designer's objective is to design the behavior rules of individual players in such a way as to ensure that the players eventually learn a Nash equilibrium of $\Gamma$ through the repeated interaction.

Let $\left\{ a_i(t) \right\}_{t=1}^\infty$ be a sequence of actions for player $i$, where $a_i(t) \in A_{i}$.  Let $\{ a(t) \}_
{t=1}^\infty$ be the associated sequence of \textit{joint} actions $a(t) = (a_1(t),\ldots,a_n(t)) \in A^n$.  Note that $a_i(t) \in \mathbb{R}^{m_i}$; when necessary, we denote the $k$th element of the vector $a(t)$ by $a(t,k)$.

Let $q_i(t)$ be the normalized histogram (empirical distribution) of the actions of player $i$ up to time $t$, i.e., $q_i(t) = \frac{1}{t} \sum_{s=1}^t a_i(s)$. Similarly, let $ q(t) = \frac{1}{t} \sum_{s=1}^t a(s)$ be the \textit{joint} empirical distribution corresponding to the joint actions of the players up to time $t$.

Under assumption \textbf{A.1} (i.e., when all players have identical action spaces), let $ \bar q(t) = (q_1(t)+q_{2}(t)+\cdots+q_{n}(t))/n.$ Note that $\bar{q}(t) \in \mathbb{R}^{m}$, where $m$ denotes the cardinality of the action spaces (assumed identical) of the individual players. We refer to $\bar q(t)$ as the (eponymous) empirical centroid distribution, or the average empirical distribution.
Let $\bar q^n(t) = \left(\bar q(t), \bar q(t),\ldots ,\bar q(t)\right) \in \Delta^n$ denote the mixed strategy where all players use the empirical average as their individual strategy.

In the exposition of ECFP and distributed ECFP, several terms will be introduced that are related to the empirical centroid. Though we do not define all these terms now, the table below may be used for reference in determining the relationship between these terms.
%\begin{figure}[t]
%\centering
%\includegraphics[totalheight=.15\textheight,
%width=.6\textwidth,viewport=0 0 250 300]{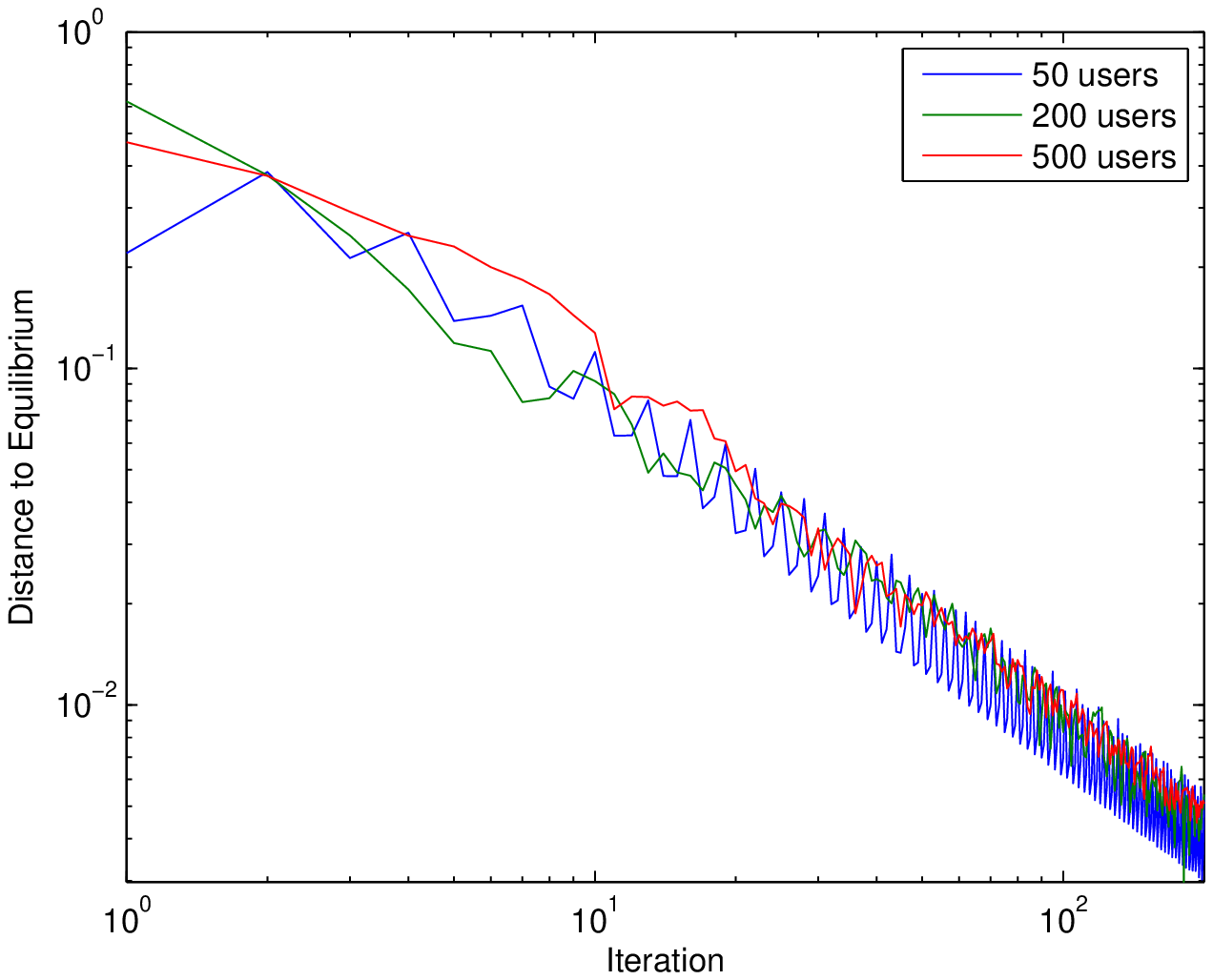}
%\caption{caption here.}
%\label{fig_distance_to_NE}
%\end{figure}
%\begin{figure} [h]
%\includegraphics[totalheight=.8\textheight,
%width=.25\textwidth,viewport=0 0 270 140]{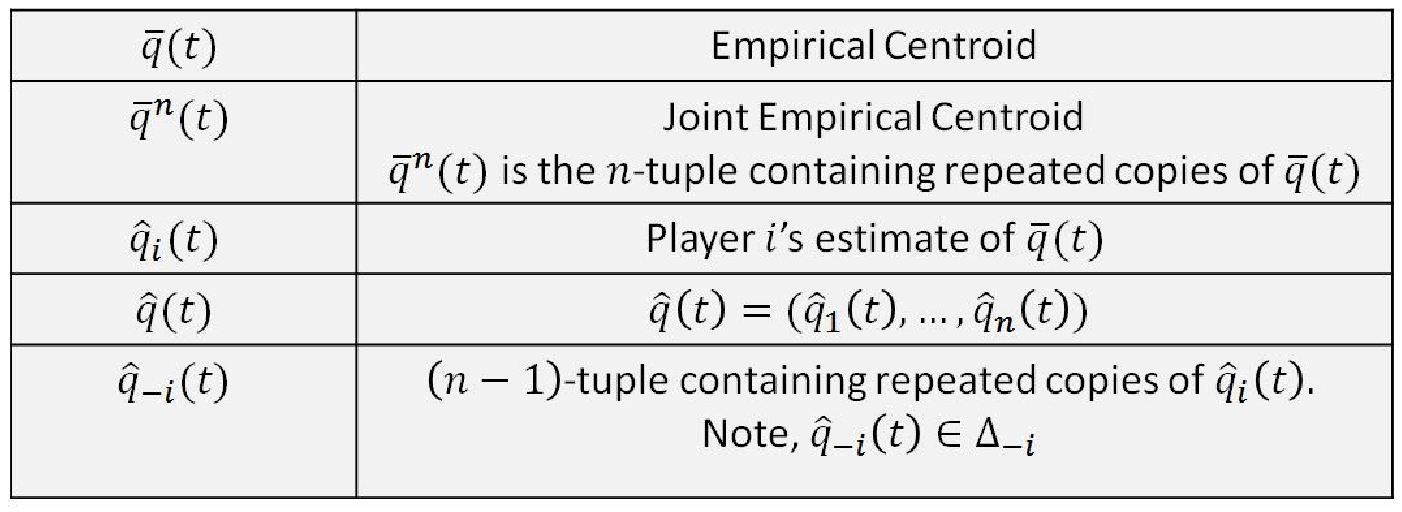}
%\caption{Terms related to the empirical centroid.}
%\label{table_ECFP}
%\end{figure}

\begin{figure}[t]%[htbp]
\begin{center}
\includegraphics[scale=0.6]{figures/q_bar_table.eps}%scale=0.3
\caption{Terms related to the empirical centroid.}
\label{table_ECFP}
\end{center}
\end{figure}
\vspace{-0.2cm}

\section{Fictitious Play}
\label{sec2A}
A fictitious play process is a sequence of actions $\{a(t)\}_{t\geq 1}$ such that, for all $i$ and $t$,\footnote{The initial action $a(1)$ may be chosen arbitrarily.}
\begin{align}
a_i(t+1) \in \arg\max \limits_{\alpha_i \in A_i} U(\alpha_i,q_{-i}(t)).
\label{FP_BR}
\end{align}
Intuitively speaking, this describes a process where each player (naively) assumes that her opponents are playing according to stationary independent strategies. Following this intuition, the player assumes that $q_{-i}(t)$ accurately represents the mixed strategy of her opponents and chooses a next-stage action in order to myopically optimize her next-stage utility.

In games where an FP process leads players to learn a NE, the equilibrium learning traditionally occurs in the sense that the empirical frequency distribution converges to the set of Nash equilibria, i.e., $d(q(t),NE)\rightarrow 0$ as $t\rightarrow \infty$. We refer to this form of learning as convergence in empirical distribution. In \cite{Mond01} it was shown that a fictitious play process converges in this sense for games satisfying \textbf{A.\ref{ass1}} -- \textbf{A.\ref{ass2}}. While theoretically promising, the result is of limited practical value due to the difficulties involved in a large-scale implementation of FP. In particular, we propose that FP, in its classical form, may not be practical for implementation in large-scale games because of the following 3 problems: (1) demanding communication requirements, (2) demanding memory requirements, and (3) high computational complexity. We discuss each in detail below.
\vspace{-1em}
\subsection{Demanding communication requirements}
\label{sec2A_1}
Implicit in \eqref{FP_BR} is the assumption that agent $i$ has instantaneous access to the action history of each opponent. We refer to this setting, where players have instantaneous access to information about the action histories of all opponents, as a \emph{centralized-information} setting. This might be impractical in a game with a large number of players.

In this paper we consider a more practical and realistic setting in which players may be incapable of directly observing the actions of opponents but are permitted to exchange information with a local subset of neighboring players in order to estimate action histories they cannot directly observe. We refer to this setting as a \emph{distributed-information} setting; the topic is discussed in section \ref{sec4}.
\vspace{-1em}
\subsection{Demanding memory requirements}
Inspection of \eqref{FP_BR} shows that each player must track the vector $q(t) = (q_1(t),\ldots,q_n(t))$. The size of this vector grows linearly in the number of players. In order to mitigate this problem, we consider ECFP, a variant of FP which uses a modified best response function. In ECFP players track only the centroid distribution, $\bar q(t)$, the memory size of which is invariant to the number of players.
\vspace{-1em}
\subsection{High computational complexity}
In order to choose a next-stage action, player $i$ must solve the optimization problem \eqref{FP_BR}. The computational complexity of computing the mixed utility given an $n$-dimensional probability density function grows exponentially with the number of players. In order to address this issue, we consider ECFP, a variant of FP that mitigates this problem by means of a modified best response rule.

\section{Empirical Centroid Fictitious Play}
\label{sec3}
The key idea of ECFP is a modification of the FP best response rule \eqref{FP_BR} which allows for mitigations in the problems of memory, and computational complexity, associated with FP.

Consider a scenario in which a player does not have the ability to track the individual actions, $a_i(t)$, of any single player. Rather, a player is only able to track the {\it{average}} action, $\bar a(t) := \frac{1}{n}\sum_{i=1}^n a_i(t)$, of the collective and therefore has access only to the {\it{average}} empirical distribution, $\bar q(t) = \frac{1}{n}\sum_{i=1}^n q_i(t)$.  An ECFP process is a sequence of actions $\{a(t)\}_{t\geq 1}$ such that for all $i$ and $t$,\footnote{The initial action, $a(1)$, may be chosen arbitrarily.}
\begin{equation}
a_i(t+1) \in \arg\max\limits_{\alpha_i \in A_i} U(\alpha_i, \bar q_{-i}(t)),
\label{v_m_eq}
\end{equation}
where $\bar q_{-i}(t) = (\bar q(t),\ldots,\bar q(t))$ is the $(n-1)$-tuple containing $(n-1)$ repeated (identical) copies of $\bar q(t)$. Intuitively speaking, this describes a process where each player (naively) assumes opponents are playing mixed strategies which are stationary, independent, and \emph{identical} --- the last assumption (identical opponent strategies) being the primary difference between ECFP and FP. The player naively assumes $\bar q(t)$ accurately represents the (identical) mixed strategy used by each opponent, and accordingly, chooses a best response to myopically optimize her next-stage utility.

In an ECFP process \eqref{v_m_eq}, the problem of demanding memory requirements is mitigated by requiring players to track only the centroid distribution, $\bar q(t)$, a vector whose memory size is invariant to the number of players in the game.

In ECFP, the mitigation in computational complexity is enabled by the introduction of the distribution $\bar q_{-i}(t)$ in the best response computation. In the joint distribution $\bar q_{-i}(t)$, all players are assumed to use independent and identical mixed strategies. Analogous to the manner in which independent and identically distributed (i.i.d.) random variables always make life simpler in statistics, the introduction of the "i.i.d." joint distribution $\bar q_{-i}(t)$ enables simplifications in the ECFP best response computation. (See Section VIII for a detailed illustration.)

In contrast to FP, ECFP, in general, admits reduced complexity best response computation at the players, which is enabled by the "i.i.d." nature of the $\bar q_{-i}(t)$ used in the ECFP best response computation (see \eqref{v_m_eq}). This same factor may enable an explicit characterization of the distribution of certain essential \emph{statistics} involved in the mixed utility computation in terms of parametric statistical families. The FP best response \eqref{FP_BR}, in contrast, is based on an optimization involving the collection $q_{-i}(t)$ of individual empirical distributions; since the individual distributions are generally different, the collection $q_{-i}(t)$ does not admit a similar simplification or reduced parameterization as far as the best response computation is concerned. For more detailed illustrations of the relative complexities of FP and ECFP best response computations, we refer the reader to Section VIII-B.

In a distributed-information setting (see Section \ref{sec2A_1} and Section \ref{sec4}) players are unable to directly observe the actions of others, and therefore may not have precise knowledge of $\bar q(t)$. However, they may form an estimate of $\bar q(t)$ by exchanging information with a local subset of \emph{neighboring} players. Similarly, in general, even in more centralized information settings, due to other forms of uncertainty the $\bar q(t)$ may not be tracked exactly at each player.

 In general, we denote by $\hat q_i(t)$ the estimate which player $i$ maintains of $\bar q(t)$. Let $\varepsilon_i(t) = \| \hat q_i(t) - \bar q(t) \|$ be the error in player $i$'s estimate of $\bar q(t)$ at time $t$. In practice, since $\bar q(t)$ is not available, the player $i$ uses its estimate $\hat q_i(t)$ as a surrogate in the best response computation \eqref{v_m_eq}. However, in order to ensure convergence of the ECFP algorithm in such erroneous best response computation environments, we require that the following assumption be satisfied.
\begin{assumption}
$\varepsilon_{i}(t) = O(\frac{\log{t}}{t^r}),\mbox{ for some }r>0.$
\label{ass3}
\end{assumption}
Under this assumption, players' estimates, $\hat q_i(t)$, are asymptotically `close enough' to the true empirical centroid $\bar q(t)$ to ensure the algorithm converges. We emphasize that the exact manner in which players form estimates $\hat q^i(t)$ of $\bar q(t)$ varies from one environment to another. For the specific distributed-information setting treated in Sections V and VI, we will provide a distributed graph-based consensus-based estimation mechanism by which the players can generate their estimates $\hat q_i(t)$'s and the latter will be shown to satisfy \textbf{A.3}.

By abusing notation, we will also refer to a sequence of actions $\{ a(t) \}_{t=1}^\infty$ as an ECFP process if
\begin{align}
a_i(t+1) \in \arg\max\limits_{\alpha_i \in A_i} U\left( \alpha_i, \hat q_{-i}(t)\right),
\label{mfp_br}
\end{align}
where the initial action $a(1)$ may be chosen arbitrarily.\footnote{We note that the traditional definition of the mixed utility $U(p)$, given in \eqref{mixed_U}, is defined over the domain $\Delta^n$.  The restriction of the domain to $\Delta^n$ is not necessitated by the definition; rather, it is a consequence of the traditional approach dealing only with mixed strategies $p \in \Delta^n$.  The approximated empirical distribution $\hat q_i(t) \in \mathbb{R}^m$, however, is permitted to be outside the simplex, $\Delta_i$, and may even take negative values. In this case we retain the definition of $U(p)$, given by \eqref{mixed_U}, but extend the domain to the set of all $n$-tuples of vectors in $\mathbb{R}^m$.  This adjustment of the traditional definition expands the domain to an unbounded set, but for practical purposes, we note that assumption \textbf{A.\ref{ass3}} implies $\{\hat q_i(t)\}_{t\geq 0}$ belongs to a compact set.}
Note that, in the special case in which $\hat q_i(t) = \bar q(t),~ \forall i$, \eqref{mfp_br} reduces to \eqref{v_m_eq} and this corresponds to a centralized information setting with perfect information. Finally, we note that assumption \textbf{A.3} (and its implication on erroneous best response computation) may be applicable in other imperfect information settings beyond the specific distributed-information formulation that we develop in Section V -- VI.

In summary, in \eqref{mfp_br} each player best responds using $\hat q_i(t)$ (her personal estimate of $\bar q(t)$) as the assumed mixed strategy for the other $(n-1)$ players. In ECFP, players learn a strategy which is a consensus Nash equilibrium strategy.  The result is summarized in the following theorem.
\begin{theorem}
Let $\{a(t)\}_{t=1}^\infty$ be an ECFP process as given in \eqref{mfp_br}, such that \textbf{A.\ref{ass1}} -- \textbf{A.\ref{ass3}} hold.  Then $d(\bar q^n(t),C) \rightarrow 0$ as $t \rightarrow \infty$.
\label{thrm2}
\end{theorem}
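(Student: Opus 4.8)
The plan is to show that the centroid sequence $\{\bar q(t)\}$ executes a \emph{perturbed (weakened) fictitious play} for an induced symmetric potential game, so that the classical identical-interest FP machinery of \cite{Mond96,Mond01} can be brought to bear, with the estimation error of \textbf{A.\ref{ass3}} absorbed as a vanishing, summable perturbation. Throughout, write $A$ and $\Delta$ for the common pure-action set and simplex (identical across players by \textbf{A.\ref{ass1}}), set $F(\alpha,p):=U(\alpha,p,\dots,p)$, and define the \emph{equilibrium gap}
\[
W(p):=\max_{\alpha\in A}F(\alpha,p)-U(p,\dots,p)\;\ge\;0.
\]
The first step is a reduction: $\bar q^n(t)=(\bar q(t),\dots,\bar q(t))$ already lies in the consensus subspace $\{p_1=\dots=p_n\}$ for every $t$, so the only obstruction to membership in $C$ is the Nash property. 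Since $W(p)=0$ exactly when $p$ is a best response to the symmetric profile $(p,\dots,p)$, i.e. exactly when $(p,\dots,p)\in C$, and since $W$ is continuous on the compact set $\Delta$, the convergence $W(\bar q(t))\to 0$ forces $\bar q(t)$ into the zero set of $W$ and hence $d(\bar q^n(t),C)\to 0$. The theorem thus reduces to proving $W(\bar q(t))\to 0$.

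Next I would record the recursion $\bar q(t+1)=\tfrac{t}{t+1}\bar q(t)+\tfrac{1}{t+1}\bar a(t+1)$ with $\bar a(t+1)=\tfrac1n\sum_i a_i(t+1)$, and introduce the potential $\phi(p):=U(p,\dots,p)$. Differentiating and using the permutation invariance of \textbf{A.\ref{ass2}} gives $\partial\phi/\partial p(\alpha)=n\,F(\alpha,p)$, so that $\phi$ is, up to the constant $n$, a potential for the symmetric best-response field $p\mapsto F(\cdot,p)$; in particular the Euler identity $\nabla\phi(p)\cdot p=n\phi(p)$ holds. Because $U$ is a polynomial and hence Lipschitz on $\Delta$, \textbf{A.\ref{ass3}} implies that each $a_i(t+1)$, which best responds to $\hat q_{-i}(t)$, is in fact an $\eta(t)$-best response to the \emph{exact} symmetric profile $\bar q_{-i}(t)$, with $\eta(t)=O(\log t/t^{r})$ and therefore $\sum_t \eta(t)/t<\infty$. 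This is precisely where \textbf{A.\ref{ass3}} enters, and getting this perturbation bookkeeping right (so that the error is genuinely summable against the harmonic weights) is the first delicate point.

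The core estimate is a first-order expansion of $\phi$ along the recursion, legitimate since $\phi$ is smooth and all iterates lie in $\Delta$:
\[
\phi(\bar q(t+1))-\phi(\bar q(t))\;\ge\;\tfrac{n}{t+1}\,W(\bar q(t))\;-\;\tfrac{n\,\eta(t)}{t+1}\;-\;\tfrac{C}{(t+1)^2}.
\]
Here the leading term comes from summing the individual inequalities $U(a_i(t+1),\bar q_{-i}(t))\ge \max_\alpha F(\alpha,\bar q(t))-\eta(t)$ over $i$ and invoking the Euler identity, while $C$ is a uniform bound on the Hessian of $\phi$ over the compact simplex. Rearranging, summing over $t$, telescoping the bounded quantity $\phi(\bar q(t))$, and using $\sum_t \eta(t)/t<\infty$ together with $\sum_t (t+1)^{-2}<\infty$ yields the summability
\[
\sum_{t\ge 1}\tfrac{1}{t+1}\,W(\bar q(t))<\infty .
\]

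The main obstacle is the final passage from this summability (note $\sum_t \tfrac{1}{t+1}=\infty$) to the genuine limit $W(\bar q(t))\to 0$, since summability alone only yields $\liminf_t W(\bar q(t))=0$. Here I would exploit the slow variation of the centroid: $\|\bar q(t+1)-\bar q(t)\|=O(1/t)$, so by Lipschitzness of $W$ the increments obey $|W(\bar q(t+1))-W(\bar q(t))|\le L'/(t+1)$. A standard oscillation argument then closes the gap: were $W(\bar q(t))\ge 2\epsilon$ infinitely often while $\liminf_t W(\bar q(t))=0$, each ascent from level $\epsilon$ to $2\epsilon$ would require a block of indices whose weights $\tfrac{1}{t+1}$ sum to at least $\epsilon/L'$, and on each such block $W(\bar q(t))\ge\epsilon$; with infinitely many blocks this would force $\sum_t \tfrac{1}{t+1}W(\bar q(t))=\infty$, a contradiction. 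Hence $W(\bar q(t))\to 0$, and by the reduction in the first paragraph $d(\bar q^n(t),C)\to 0$. I expect essentially all the real work to concentrate in this last paragraph, namely the slow-variation lemma converting summability into convergence, and in the perturbation accounting of the second paragraph; the intervening steps are the standard potential-game computation underlying \cite{Mond96,Mond01}.
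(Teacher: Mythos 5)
Your proof is correct, and its main body coincides with the paper's argument: the paper likewise expands the potential $U(\bar q^n(\cdot))$ along the recursion $\bar q^n(t+1)=\bar q^n(t)+\frac{1}{t+1}(\bar a^n(t+1)-\bar q^n(t))$, bounds the second-order terms by $M/(t+1)^2$, absorbs the estimation error of \textbf{A.3} as a term $L_i(t)=O(\log t/t^r)$ that is summable against harmonic weights, and arrives at $\sum_t \beta_t/t<\infty$ where $\beta_t=\sum_i[v_i^m(\bar q(t))-U(\bar q^n(t))]$ --- which is exactly $n\,W(\bar q(t))$ in your notation (your differential/Euler computation $\nabla\phi(p)\cdot v=nU(v,p,\dots,p)$ is the same step the paper performs discretely via the permutation-invariant rearrangement $\sum_i U(\bar a_i(t+1),\bar q_{-i}(t))=\sum_j U(a_j(t+1),\bar q_{-j}(t))$). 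Where you genuinely diverge is the endgame. The paper passes from $\sum_t\beta_t/t<\infty$ to the conclusion through a four-lemma chain: Kronecker's lemma gives $\frac1T\sum_{t\le T}\beta_t\to 0$ (Lemma 3), this bounds the \emph{density} of times with $\bar q^n(t)\notin C_\varepsilon$ (Lemma 5), then $\notin B_\delta(C)$ (Lemma 6), and finally a Monderer--Shapley-style lemma (Lemma 7, cited rather than proved) upgrades density-zero to $d(\bar q^n(t),C)\to 0$. You instead close directly with a slow-variation oscillation argument: $\|\bar q(t+1)-\bar q(t)\|=O(1/t)$ plus Lipschitzness of $W$ shows any recurrent ascent from $\epsilon$ to $2\epsilon$ costs harmonic mass at least $\epsilon/L'$ on a block where $W\ge\epsilon$, contradicting summability. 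This is a real simplification: it is self-contained (no appeal to the uncited Lemma 7, which is in any case where the slow-variation property secretly enters the paper's route), it avoids the Ces\`aro/density detour entirely, and it yields the full limit in one stroke; what the paper's route buys is reuse of standard FP machinery and intermediate statements (density of $\varepsilon$-equilibrium times) that are of independent interest in this literature. One cosmetic caveat: the estimates $\hat q_i(t)$ need not lie in the simplex, so your ``Lipschitz on $\Delta$'' should read ``locally Lipschitz on a compact set containing the iterates,'' which \textbf{A.3} guarantees --- the paper handles exactly this point in a footnote.
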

\begin{proof}
Let $\bar a(t) = \frac{1}{n}\sum_{i=1}^n a_i(t)$, where $a_i(t) \in A_i$. Let $\bar a^n(t) \in \Delta^n$ be the $n$-tuple $(\bar a(t),\ldots,\bar a(t))$.

Note that for $t\geq 1$
\begin{equation}
\bar q^n(t+1) = \bar q^n(t) + \frac{1}{t+1} \left(\bar a^n(t+1) - \bar q^n(t) \right).
\label{thrm1_eq2}
\end{equation}
Using \eqref{thrm1_eq2} we write
\begin{align}
U(\bar q^n(t+1))= U\left(\bar q^n(t) + \frac{1}{t+1}\left(\bar a^n(t+1) - \bar q^n(t) \right)\right).
\end{align}
Applying the multilinearity of $U(\cdot)$, we obtain
\begin{align}
U(\bar q^n(t+1)) & = U(\bar q^n(t)) + \frac{1}{t+1}\sum\limits_{i=1}^n U\left(\bar a_i(t+1),\bar q_{-i}(t)\right)\\
& - \frac{1}{t+1}\sum\limits_{i=1}^n U(\bar q_i(t),\bar q_{-i}(t))  + \zeta(t+1).
\end{align}
where we have explicitly written the first order terms of the expansion and collected the remaining terms in $\zeta(t+1)$. Note that the number of second order terms in the above expansion is finite and the terms are uniformly bounded since $\max_{p\in\Delta^{n}}|U(p)| < \infty$. Hence, there exists a positive constant $M$ (independent of $t$) large enough such that $|\zeta(t+1)| \leq M(t+1)^{-2}$ for all $t$. Thus,
\begin{align}
\nonumber U(\bar q^n(t+1)) & \geq U(\bar q^n(t)) + \frac{1}{t+1}\sum\limits_{i=1}^n U\left(\bar a_i(t+1),\bar q_{-i}(t)\right)\\
& - \frac{1}{t+1}\sum\limits_{i=1}^n U(\bar q_i(t),\bar q_{-i}(t)) - \frac{M}{(t+1)^2}.
\label{thrm2_eq5}
\end{align}
The permutation invariance and multilinearity of $U(\cdot)$ permits a rearranging of terms.  The notation $U([a_j(t)]_i,\bar q_{-i}(t))$ indicates the expected utility player $i$ would receive were she to use the strategy $a_j(t)$ and all other players use the strategy $\bar q(t)$:
\vspace{-1em}
{\small
\begin{align}
  \sum\limits_{i=1}^n U\left(\bar a_i(t+1),\bar q_{-i}(t)\right)  = \sum\limits_{i=1}^n U\left( \left[\frac{1}{n} \sum\limits_{j=1}^n a_j(t+1)\right]_i,\bar q_{-i}(t)  \right)\\
  = \sum\limits_{i=1}^n \frac{1}{n} \sum\limits_{j=1}^n U\left( \left[ a_j(t+1)\right]_i,\bar q_{-i}(t)  \right)\\
  = \sum\limits_{i=1}^n \frac{1}{n} \sum\limits_{j=1}^n U\left( \left[ a_j(t+1)\right]_j,\bar q_{-j}(t)  \right)
  = \sum\limits_{j=1}^n U\left( a_j(t+1),\bar q_{-j}(t)  \right).
\end{align}
\vspace{-2mm}
}
Thus,
%{\small
\begin{align}
\nonumber & U\left(\bar q^n(t+1)\right) - U\left( \bar q^n(t) \right) + \frac{M}{(t+1)^2}\\
& \geq \frac{1}{t+1}\sum\limits_{i=1}^n U\left(a_i(t+1),\bar q_{-i}(t)\right)-\frac{1}{t+1}\sum\limits_{i=1}^n U(\bar q_i(t),\bar q_{-i}(t)).
 \label{thrm1_eq1}
\end{align}
%}
Let $v_i^m():\Delta_i\rightarrow \mathbb{R} $, $v_i^m(f) := \max_{\alpha_i \in A_i} U(\alpha_i,f_{-i}) - U(f^n)$, where $f_{-i} := (f,\ldots,f)$ is the $(n-1)$-tuple and $f^n = (f,\ldots,f)$ is an $n$-tuple. Let $L_i(t+1) = v_i^m\left(\hat q_i(t)\right) - U\left(a_i(t+1),\bar q_{-i}(t)\right)$. Substituting in $L_i(t+1)$, \eqref{thrm1_eq1} becomes
\begin{align}
\nonumber U&\left(\bar q^n(t+1)\right) - U\left( \bar q^n(t) \right) + \frac{M}{(t+1)^2} + \frac{1}{t+1}\sum\limits_{i=1}^n L_i(t+1)\\
\nonumber \geq &\frac{1}{t+1}\sum\limits_{i=1}^n \left( v_i^m\left(\hat q_i(t)\right)-U\left(\bar q_i(t),\bar q_{-i}(t)\right)\right)
= \frac{\alpha_{t+1}}{t+1},
\label{thrm2_eq4}
\end{align}
where $\alpha_{t+1} := \sum_{i=1}^n \left( v_i^m \left(\hat q_i(t)\right) - U\left(\bar q_i(t),\bar q_{-i}(t)\right) \right).$
Note that $U(\cdot)$ is multilinear and therefore locally Lipschitz continuous.  As noted earlier, assumption \textbf{A.\ref{ass3}} implies that $\{\hat q_i(t)\}_{t\geq1}$ is contained in a compact subset of $\mathbb{R}^m$. Therefore, there exists a positive constant $K$ (independent of $t$), such that $\vert U(a_i(t+1),\hat q_{-i}(t)) - U(a_i(t+1),\bar q_{-i}(t)) \vert \leq  K\| (a_i(t+1),\hat q_{-i}(t)) - (a_i(t+1),\bar q_{-i}(t))\|$, for all $t$.  By assumption \textbf{A.\ref{ass3}}, $\| \hat q_{-i}(t) - \bar q_{-i}(t)\| = O(\frac{\log{t}}{t^r})$, and hence $| U(a_i(t+1),\hat q_{-i}(t)) - U(a_i(t+1),\bar q_{-i}(t)) | = O(\frac{\log{t}}{t^r})$, which, by \eqref{mfp_br}, implies $L_i(t) = O(\frac{\log{t}}{t^r})$. In particular, $\sum_{t=2}^T \frac{L_i(t)}{t} < B$ is bounded above by some $B \in \mathbb{R}$ for all $T\geq1$.  Summing over $1\leq t \leq T$ in \eqref{thrm2_eq4},
%{\small
\begin{align}
& U(\bar q^n(T+1)) - U(\bar q^n(1)) + \sum\limits_{t=1}^T \frac{M}{(t+1)^2} + \sum\limits_{t=1}^T \sum\limits_{i=1}^n\frac{L_i(t+1)}{t+1}\\
& \geq \sum\limits_{t=1}^T \frac{\alpha_{t+1}}{t+1}.
\end{align}
%}
Note that $\sum_{t=1}^T\frac{M}{(t+1)^2}$ is summable; therefore all terms on the left hand side are bounded above for all $T\geq1$, and hence it follows that $$\sum_{t=2}^T \frac{\alpha_t}{t} < \overline{B}$$ is bounded above by some $\overline{B} \in \mathbb{R}$, for all $T\geq 2$. Let $\beta_{t+1} = \sum_{i=1}^n \left[ v_i^m(\bar q(t)) - U\left(\bar q^n(t)\right)\right]$, and note that, by definition of $v_{i}^m(\cdot)$, $\beta_{t} \geq 0$ for all $t$. By Lemma \ref{IR6} in the appendix, $| v_i^m(\hat q_i(t)) - v_i^m(\bar q(t)) | =  O \left( \frac{\log{t}}{t^r} \right)$.  Thus,
$$| \alpha_t - \beta_t | = O \left( \frac{\log{t}}{t^r} \right)$$ and hence by Lemma \ref{IR8}, $\sum_{t=2}^T \frac{\beta_t}{t} < \infty$ converges as $T\rightarrow\infty$.  By Lemma \ref{IR1} it then follows that
%{\small
\begin{equation}
\lim\limits_{T \rightarrow \infty} \frac{\beta_2 + \beta_3 + \ldots + \beta_T}{T} = 0.
\end{equation}
%}
Subsequently, by Lemma \ref{IR4}, we obtain for every $\varepsilon > 0$,
%{\small
\begin{equation}
\lim\limits_{T \rightarrow \infty} \frac{ \#\{ 1 \leq t \leq T: \bar q^n(t) \notin C_\varepsilon \}}{T} = 0.
\end{equation}
%}
By Lemma \ref{IR7}, this is equivalent to
%{\small
\begin{equation}
\lim\limits_{T \rightarrow \infty} \frac{ \#\{ 1 \leq t \leq T: \bar q^n(t) \notin B_\delta(C) \}}{T} = 0
\end{equation}
%}
for every $\delta > 0$.  Finally, by Lemma \ref{IR3}, we obtain $d(\bar q^n(t),C) \rightarrow 0$ as $t \rightarrow \infty$.
\end{proof}

We emphasize that Theorem \ref{thrm2} shows that the $n$-tuple of the {\it{average}} empirical distribution converges to $C$, that is, $d(\bar q^n(t),C) \rightarrow 0$.  This is not the same as the more traditional definition of convergence in empirical frequency,
\begin{equation}
d(q(t),C)\rightarrow 0 \mbox{ as } t \rightarrow \infty.
\label{convergence_def}
\end{equation}
In the former, the $n$-tuple containing repeated copies of the empirical centroid converges to equilibrium, in the latter, the tuple of individual empirical distributions converges to equilibrium.

The practical meaning of Theorem \ref{thrm2} is that players do in fact learn a consensus equilibrium strategy.  It is true that each player $i$ has access only to the distribution $\hat q_i(t)$.  However, the tuple of these distributions $(\hat q_1(t),\hat q_2(t),\ldots,\hat q_n(t))$ also converges asymptotically to the set of consensus equilibria (see \textbf{A.3}), i.e., $$d((\hat q_1(t),\hat q_2(t),\ldots,\hat q_n(t)),C)\rightarrow 0 \mbox{ as } t \rightarrow \infty,$$ by \textbf{A.\ref{ass3}}. Therefore, player $i$ has direct access to her portion of the convergent joint strategy. Thus each player $i$ learns a strategy which is a Nash (consensus) equilibrium with respect to the strategies learned by other players.

Note also the set of limit points of ECFP is restricted to $C$---a subset of the NE. Thus, if Pareto superior Nash equilibria exist outside the set $C$, then ECFP will never reach these points, though an algorithm such as FP may. This may be seen as a tradeoff for the improvements in memory and complexity achieved in ECFP.

\section{Distributed-Information Setting}
\label{sec4}
In a centralized-information setting (with perfect information), players are assumed to have instantaneous knowledge of the action histories of all other players. Such an assumption is clearly impractical in a large-scale scenario.  We refer to a setting where players are unable to directly observe the actions of all opponents but are equipped with an underlying communication infrastructure through which they can communicate with a local subset of neighboring players, as a distributed-information setting. This is the framework used by the distributed algorithms considered in this paper.

In order to explicitly account for the communication costs that are to be incurred in the information gathering/dissemination process, we assume a problem framework wherein players are permitted to communicate via a preassigned (sparse but connected) communication graph $G = (V,E)$.
%Communication costs are taken into account in the following manner: the communication cost from player $i$ to $j$ is zero if there is an edge, $(i, j) \in E$, and infinite otherwise.
Formally, we assume:
\begin{assumption}
Players are endowed with a preassigned communication graph $G=(V,E)$, where the vertices $V$ represent the players and the edge set $E$ consists of communication links (bidirectional) between pairs of players that can communicate directly. The graph $G$ is connected.
\label{ass11}
\end{assumption}
\begin{assumption}
Players directly observe only their own actions.
\label{ass12}
\end{assumption}
\begin{assumption}
A player may exchange information with immediate neighbors, as defined  by $G$, at most once for each iteration or round of the repeated play.
\label{ass13}
\end{assumption}
Communication is treated as non-strategic --- players do not manipulate the information they send for strategic gain. Also, we emphasize that this is a $1$-time step approach --- game play and communication take place at the same rate, i.e., evolve according to the same discrete time clock. Note that if players could
communicate arbitrarily often between game plays then, in a certain sense, we would be back to the centralized setting since infinite rounds of consensus deliver $\overline q(t)$ at each player. By restricting ourselves to one round of consensus per game play, we face a more realistic and challenging scenario.

In a distributed-information implementation of ECFP, subsequent to each round of the repeated play, players exchange information once with immediate neighbors to form an updated estimate, $\hat q_i(t)$. The next stage action $a_i(t+1)$ is then chosen as a best response to this estimate according to \eqref{mfp_br}.

The exact manner in which players update their estimate $\hat q_i(t)$ is a question of algorithmic design. The important factor is that the estimates be formed in a way such that assumption \textbf{A.\ref{ass3}} is satisfied. In section \ref{sec5} we present a distributed-information implementation of ECFP where players update $\hat q_i(t)$ according to a consensus-type algorithm.

\section{Distributed Implementation of ECFP}
\label{sec5}
\subsection{Distributed Problem Formulation}
\label{sec5-1}
We present an implementation of the ECFP algorithm which utilizes the underlying communication infrastructure presented in section \ref{sec4}; we refer to the algorithm as distributed ECFP.

Define the following two matrices:
\begin{equation}
Q(t) := ( q_1(t)~ q_2(t)~ \ldots ~ q_n(t))^T \in \mathbb{R}^{n \times m},
\end{equation}
\begin{equation}
\hat Q(t) := (\hat q_1(t) ~ \hat q_2(t) ~ \ldots ~ \hat q_n(t))^T \in \mathbb{R}^{n \times m}
\end{equation}
where $\hat q_i(t) \in \mathbb{R}^m$ denotes player $i$'s estimate of $\bar q(t) \in \mathbb{R}^m$. Let $\hat q(t) \in \mathbb{R}^n$ be the $n$-tuple $(\hat q_1(t),\ldots,\hat q_n(t))$. The tuple $\hat q(t)$ will be important in distributed ECFP; in particular we will prove that $\hat q(t)$ converges to the set of consensus equilibria.
\subsection{Distributed ECFP Algorithm}
\label{dist_mfp}
{\it{Initialize}}\newline
(i) At time $t=1$, each player $i$ chooses an arbitrary initial action $a_i(1)$.  The initial empirical distribution for player $i$ is given by $q_i(1) = a_i(1)$. Player $i$ initializes her local estimate of the empirical distribution as
\begin{equation}
\hat q_{i}(1) = \sum\limits_{j\in\Omega_{i}\cup\{i\}} w_{ij} q_j(1)
\label{ECFP_initial}
\end{equation}
where $\Omega_{i}$ is the set of neighbors of player $i$ and the $w_{ij}$'s are constant neighborhood weighting factors.

{\it{Iterate}}\newline
(ii) At each time $t>1$, player $i$ computes the set of best responses using $\hat q_i(t)$ as the assumed mixed strategy for each of the $n-1$ other players. The next action
\begin{equation}
a_i(t+1) \in \{ \arg\max_{\alpha_i\in A_i} U(\alpha_i, \hat q_{-i}(t)) \}
\label{dist_ECFP_process2}
\end{equation}
is played according to the best response calculation.   In the event of multiple pure strategy best responses, any of the maximizing actions in \eqref{dist_ECFP_process2} may be chosen arbitrarily.  The local empirical distribution $q_i(t+1)$ is updated to reflect the action taken, i.e.,

\begin{equation}
q_i(t+1) = q_i(t) + \frac{1}{t+1}(a_i(t+1) - q_i(t)).
\end{equation}

(iii) Subsequently each player $i$ computes a new estimate of the network-average empirical distribution using the following update rule:\footnote{Note that \eqref{update_eq_1} is equivalent to $\hat q_i(t+1) = \sum_j w_{i,j}(\hat q_j(t) + 1/(t+1)(a_i(t+1)-q_i(t)))$. This is closely related to minimizing an aggregate cost function using a distributed stochastic gradient descent method \cite{chen2012diffusion,chen2013distributed,ram2010distributed,srivastava2011distributed}:  $J^{glob}(q)= \sum_{i=1}^n E\|a_i(t) - q\|^2$ whose minimizer is the average of actions over time and over players, and where the expectation is over the empirical distribution of $a_i(t)$ over time. The key thing to note is that the exact minimizer of this cost is time-varying and, given that we are operating in a distributed environment with only one round of communication allowed per time slot, we can only track this dynamic minimizer using an iterative method as given in \eqref{update_eq_1}.

}
\begin{equation}
\hat q_i(t+1) = \sum\limits_{j\in\Omega_{i}\cup\{i\}} w_{i,j}\left( \hat q_j(t) + q_j(t+1) - q_j(t)\right),
\label{update_eq_1}
\end{equation}
where $\Omega_i$ is the set of neighbors of player $i$, and $w_{i,j}$ is a weighting constant.\footnote{Note that the set $\Omega_{i}\cup\{i\}$ in the summation indicates that player $i$ uses her own (local) information and that of her neighbors to update her estimate. The update rule is clearly distributed as information exchange is restricted to neighboring players only.}\newline
The update in \eqref{update_eq_1} is represented in more compact notation as
\begin{equation}
\hat Q(t+1) = W\left(\hat Q(t) + Q(t+1) - Q(t)\right),
\label{W_matrix_eq}
\end{equation}
where $W \in \mathbb{R}^{n \times n}$ is the weighting matrix with entries $w_{i,j}$.  We assume $W$ satisfies the following assumption:
\begin{assumption}
\label{ass5}
The weight matrix $W$ is an $n \times n$ matrix that is doubly stochastic, aperiodic, and irreducible, with sparsity conforming to the communication graph $G$.
\end{assumption}
Note that given assumption \textbf{A.\ref{ass11}} ($G$ is a connected graph), it is always possible to find a matrix $W$ satisfying these conditions (see \cite{dimakis2010gossip,olfati2007consensus,jadbabaie2003coordination}).

\subsection{Distributed ECFP: Main Result}
It is important to note that the process generated by \eqref{ECFP_initial} -- \eqref{update_eq_1} constitutes a special case of the general ECFP best response dynamics given in \eqref{mfp_br}, in which the estimates $\hat{q}_{i}(t)$ follow the construction \eqref{update_eq_1}. Thus, the sequence of actions $\{a(t)\}_{t=1}^{\infty}$ generated by \eqref{ECFP_initial}-\eqref{update_eq_1} is an ECFP process (in the sense of \eqref{mfp_br}), although to emphasize the distributed setting and constructions, we will refer to it as a {\it{distributed ECFP process}}.  In a distributed ECFP process, players learn a consensus equilibrium strategy in a setting where information exchange is restricted to a local neighborhood of each agent.  The result is summarized in the following Theorem.
\begin{theorem}
Let $\{a_i(t)\}_{t=1}^\infty$ be a distributed ECFP process such that assumptions \textbf{A.\ref{ass1}}--\textbf{A.\ref{ass2}}, \textbf{A.4}--\textbf{A.\ref{ass5}} hold.  Then $d(\hat q(t),C) \rightarrow 0 \mbox{ as } t \rightarrow \infty$.  In particular, the agents' estimates $\hat q_i(t)$ reach asymptotic consensus, i.e. $d\left(\hat q_i(t), \hat q_j(t)\right) \rightarrow 0 \mbox{ as } t \rightarrow \infty$ for each pair $(i,j)$ of agents.  Moreover, the agents achieve \emph {asymptotic strategy learning}, in the sense that $d((\hat q_i(t))^n,C) \rightarrow 0 \mbox{ as } t \rightarrow \infty$ for all $i = 1,\ldots,n$.
\label{thrm5}
\end{theorem}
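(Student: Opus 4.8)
The plan is to exploit the observation made immediately before the theorem: a distributed ECFP process is a special case of the general ECFP dynamics \eqref{mfp_br}, in which the surrogate estimates $\hat q_i(t)$ are produced by the consensus recursion \eqref{update_eq_1}. Consequently, the entire burden of the proof reduces to showing that these particular estimates satisfy assumption \textbf{A.\ref{ass3}}, i.e., that $\varepsilon_i(t) = \|\hat q_i(t)-\bar q(t)\| = O(\log t/t^r)$ for some $r>0$. Once this consensus-error bound is in hand, Theorem \ref{thrm2} applies verbatim (\textbf{A.\ref{ass1}}--\textbf{A.\ref{ass2}} being assumed) and delivers $d(\bar q^n(t),C)\to 0$; the three assertions of the theorem then follow by elementary triangle-inequality arguments. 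I would therefore organize the proof as: (i) derive a disagreement recursion in matrix form, (ii) bound the consensus error, and (iii) invoke Theorem \ref{thrm2} and conclude.

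For step (i), I would work with the matrices $Q(t)$ and $\hat Q(t)$ and rewrite \eqref{W_matrix_eq} as $\hat Q(t+1) = W\hat Q(t) + W\delta(t+1)$, where $\delta(t+1):=Q(t+1)-Q(t)$. The crucial structural fact is double stochasticity of $W$ (\textbf{A.\ref{ass5}}): writing $J:=\tfrac1n\ones\ones^T$, one has both $\ones^T W=\ones^T$ and $W\ones=\ones$. The first identity, together with the initialization \eqref{ECFP_initial} (which gives $\hat Q(1)=WQ(1)$, hence $\tfrac1n\ones^T\hat Q(1)=\bar q(1)^T$), yields by induction the mass-conservation property $\tfrac1n\ones^T\hat Q(t)=\bar q(t)^T$ for all $t$; that is, the network average of the estimates always equals the true centroid. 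Introducing the disagreement matrix $e(t):=\hat Q(t)-\ones\bar q(t)^T$ (so that $Je(t)=0$ by mass conservation), a direct computation using $W\ones=\ones$ and $WJ=JW=J$ collapses the recursion to $e(t+1)=(W-J)e(t)+(W-J)\delta(t+1)$.

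For step (ii), I would note that under \textbf{A.\ref{ass5}} (doubly stochastic, aperiodic, irreducible) the Perron--Frobenius theorem makes $1$ a simple eigenvalue of $W$ with eigenvector $\ones$ and all remaining eigenvalues of modulus strictly less than one, so $\operatorname{sr}(W-J)=:\gamma<1$ and $\|(W-J)^k\|\le C\gamma_1^k$ for some constant $C$ and any fixed $\gamma_1\in(\gamma,1)$. Since each row of $\delta(t+1)$ equals $\tfrac{1}{t+1}(a_i(t+1)-q_i(t))$ with bounded argument, $\|\delta(t+1)\|=O(1/t)$. Unrolling the recursion gives $e(t+1)=(W-J)^t e(1)+\sum_{k=1}^t (W-J)^{t-k+1}\delta(k+1)$; the first term decays geometrically, and the second is a convolution of a geometric sequence with a $1/k$ sequence, which a standard split-the-sum estimate bounds by $O(1/t)$. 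Hence $\varepsilon_i(t)=O(1/t)=O(\log t/t)$, so \textbf{A.\ref{ass3}} holds with $r=1$.

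Finally, for step (iii), Theorem \ref{thrm2} gives $d(\bar q^n(t),C)\to 0$. Because $\|\hat q(t)-\bar q^n(t)\|^2=\sum_i \varepsilon_i(t)^2\to 0$, the triangle inequality yields $d(\hat q(t),C)\to 0$; asymptotic consensus follows from $d(\hat q_i(t),\hat q_j(t))\le \varepsilon_i(t)+\varepsilon_j(t)\to 0$; and since $\|(\hat q_i(t))^n-\bar q^n(t)\|=\sqrt n\,\varepsilon_i(t)\to 0$, we also obtain $d((\hat q_i(t))^n,C)\to 0$ for each $i$. I expect the main obstacle to be step (ii): the contraction estimate is cleanest for symmetric $W$, but for a merely doubly stochastic $W$ one must pass through the spectral radius (rather than the operator norm) of $W-J$ and justify the geometric bound $\|(W-J)^k\|\le C\gamma_1^k$ (e.g.\ via Gelfand's formula or an equivalent contraction norm), after which the convolution bound is routine.
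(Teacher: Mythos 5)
Your proposal is correct, and its top-level architecture is exactly the paper's: observe that distributed ECFP is an instance of \eqref{mfp_br}, verify \textbf{A.\ref{ass3}} with $r=1$ for the consensus-generated estimates, invoke Theorem \ref{thrm2} to get $d(\bar q^n(t),C)\to 0$, and finish with triangle inequalities (the paper leaves these last steps implicit). Where you genuinely diverge is in how the error bound $\|\hat q_i(t)-\bar q(t)\|=O(\log t/t)$ --- the content of the paper's Lemma \ref{lemma2}, which rests on Lemma \ref{lemma4} --- is established. The paper writes the disagreement recursion as $e(t+1)=W(e(t)+\xi(t))$ with $\xi(t)$ the centered increment, bounds the unrolled sum term-by-term using the one-step contraction factor $\lambda=\sup_{y:\sum_i y_i=0}\|Wy\|/\|y\|<1$, and then applies Chebyshev's sum inequality to bound the resulting convolution by the running average $\epsilon_{avg}(t)=O(\log t/t)$; your route passes to $W-J$, uses $(W-J)^k=W^k-J$ with Perron--Frobenius primitivity and Gelfand's formula to get $\|(W-J)^k\|\le C\gamma_1^k$, and bounds the geometric-against-$1/k$ convolution by a split-sum estimate, which even yields the slightly sharper $O(1/t)$ (your mass-conservation identity $\tfrac1n\ones^T\hat Q(t)=\bar q(t)^T$ plays precisely the role of the paper's zero-initial-error assumption \textbf{A.\ref{ass10}}, which the paper handles by a footnote in Lemma \ref{lemma2}). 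Notably, your closing caveat about non-symmetric $W$ is substantive rather than cosmetic: for a merely doubly stochastic, irreducible, aperiodic $W$ the paper's claim $\lambda<1$ can in fact fail. For instance, $W=\left(\begin{smallmatrix}0&1&0\\ 1/2&0&1/2\\ 1/2&0&1/2\end{smallmatrix}\right)$ satisfies all the conditions of \textbf{A.\ref{ass5}}, yet the zero-sum vector $y=(-1/2,\,1,\,-1/2)^T$ satisfies $\|Wy\|=\|y\|$, so $\lambda=1$ (equivalently, $W^TW$ is reducible and has eigenvalue $1$ with multiplicity two); the paper's one-step $2$-norm contraction implicitly requires symmetric (or at least normal) weights, whereas your spectral-radius/Gelfand argument needs only primitivity of $W$. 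So your version of the averaging lemma is the more robust of the two, at the modest cost of a non-explicit constant $C$ in place of the paper's explicit $2\sqrt n/(1-\lambda)$.
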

This result implies that the $n$-tuple  $(\hat q_1(t),\ldots,\hat q_n(t))$ converges to the set $C$; since $\hat q_i(t)$ is available to player $i$, player $i$ learns the component of the consensus equilibrium strategy relevant to her.
\begin{proof}
We would like to apply the results of Theorem \ref{thrm2} to the distributed ECFP process. Clearly, by hypothesis, assumptions  \textbf{A.\ref{ass1}} and \textbf{A.\ref{ass2}} hold and, as noted above, the distributed ECFP process is in fact an ECFP process in the sense of \eqref{mfp_br}.  By Lemma \ref{lemma2}, the error in a distributed ECFP process decays as
% note: here we needed to number this equation (and only this equation) so we could refer to it in the review response.
\mathtoolsset{showonlyrefs=false}
\begin{equation}
\|\hat q_i(t) - \bar q(t)  \| = O\left( \frac{\log{t}}{t} \right),
\label{thrm_dist_ECFP_eq1}
\end{equation}
thus \textbf{A.\ref{ass3}} is satisfied (with $r=1$), and hence, the distributed ECFP fits the template of Theorem \ref{thrm2}.  Applying Theorem \ref{thrm2} yields, $d(\bar q^n(t),C) \rightarrow 0 \mbox{ as } t \rightarrow \infty$. By Lemma \ref{lemma2} we obtain, $\| \hat q_i(t) - \bar q(t) \| \rightarrow 0 \mbox{ as } t \rightarrow 0$, and the result $d(\hat q(t),C) \rightarrow 0$ as $t \rightarrow \infty$ follows.
\mathtoolsset{showonlyrefs=true}
\end{proof}

Again, we emphasize that this mode of convergence is not the same as the more traditional convergence in empirical frequency (cf. \eqref{convergence_def}, and proceeding discussion).

\section{Generalizations}
\label{sec6}
\subsection{ECFP in Permutation Invariant Potential Games}
\label{sec6a}
The assumption \textbf{A.\ref{ass2}} of identical permutation invariant utility functions can be relaxed in lieu of the following broader assumption:
\begin{assumption}
The game $\Gamma$ is an exact potential game with a permutation invariant potential function.
\label{ass6}
\end{assumption}
A game $\Gamma$ is an exact potential game if there exists some function $\Phi(y):Y^n\rightarrow \mathbb{R}$, such that
\begin{align}
& u_i(y_i',y_{-i}) - u_i(y_i'',y_{-i})\\
& = \Phi(y_i',y_{-i}) - \Phi(y_i'',y_{-i})\; \forall i \in N,  \forall y_i',y_i'' \in Y_i.
\end{align}
The function $\Phi(y)$ is called a potential function for $\Gamma$.  The generalized form of Theorem \ref{thrm2} is as follows:
\begin{theorem}
Let $\{a(t)\}_{t=1}^\infty$ be an ECFP process such that \textbf{A.\ref{ass1}} (identical action spaces), \textbf{A.\ref{ass3}} ($\varepsilon_i(t) = O\left( \frac{\log{t}}{t^{r}} \right)\; ~\forall i$, for some $r>0$), and \textbf{A.\ref{ass6}} hold.  Then $d(\bar q^n(t),C) \rightarrow 0$ as $t \rightarrow \infty$.
\label{thrm3}
\end{theorem}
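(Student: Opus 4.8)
The plan is to reuse the entire ascent argument from the proof of Theorem \ref{thrm2} essentially verbatim, but with the common utility $U$ replaced throughout by the multilinear mixed extension of the potential function $\Phi$, which I denote $U_\Phi(p) := \sum_{y \in Y} \Phi(y)\, p_1(y_1)\cdots p_n(y_n)$. Assumption \textbf{A.\ref{ass6}} guarantees that $U_\Phi$ is permutation invariant (because $\Phi$ is), so it plays exactly the structural role $U$ played before: multilinear, bounded on $\Delta^n$, locally Lipschitz, and invariant under relabeling of players. Consequently the recursion \eqref{thrm1_eq2}, the first-order Taylor expansion with the uniformly $O((t+1)^{-2})$ remainder $\zeta(t+1)$, and the permutation-based rearrangement $\sum_i U_\Phi(\bar a_i(t+1),\bar q_{-i}(t)) = \sum_j U_\Phi(a_j(t+1),\bar q_{-j}(t))$ all transcribe without change. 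Assumption \textbf{A.\ref{ass1}} is retained only to ensure the centroid $\bar q(t)$ is well defined.

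The one place where the argument genuinely differs is the best-response inequality. Here each player maximizes her own utility, $a_i(t+1)\in\arg\max_{\alpha_i} U_i(\alpha_i,\hat q_{-i}(t))$, rather than a common $U$. The defining property of an exact potential game is that $U_i(\alpha_i,p_{-i}) - U_i(\alpha_i',p_{-i}) = U_\Phi(\alpha_i,p_{-i}) - U_\Phi(\alpha_i',p_{-i})$, so the two maximizations have identical arg-max sets; hence $a_i(t+1)$ is also an exact best response with respect to $U_\Phi$ against $\hat q_{-i}(t)$. Defining $v_i^m(f) := \max_{\alpha_i\in A_i} U_\Phi(\alpha_i,f_{-i}) - U_\Phi(f^n)$ and $L_i(t+1) := v_i^m(\hat q_i(t)) - U_\Phi(a_i(t+1),\bar q_{-i}(t))$ relative to $U_\Phi$, the estimation gap between $\hat q_{-i}(t)$ and $\bar q_{-i}(t)$ is absorbed exactly as before: local Lipschitz continuity of $U_\Phi$ together with \textbf{A.\ref{ass3}} gives $L_i(t) = O(\log t / t^r)$, so $\sum_t L_i(t)/t$ is bounded. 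Summing the per-step inequality, using summability of $\sum (t+1)^{-2}$ and Lemmas \ref{IR6} and \ref{IR8} exactly as in Theorem \ref{thrm2}, yields $\sum_{t} \beta_t/t < \infty$, where $\beta_{t+1} := \sum_i [v_i^m(\bar q(t)) - U_\Phi(\bar q^n(t))] \geq 0$.

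It remains to translate near-stationarity of the potential into proximity to $C$, and here the potential identity is used a second time. If $v_i^m(f) < \varepsilon$ for every $i$, then $U_\Phi(\alpha_i,f_{-i}) - U_\Phi(f^n) < \varepsilon$ for all pure $\alpha_i$, and the potential property rewrites this as $U_i(\alpha_i,f_{-i}) - U_i(f,f_{-i}) < \varepsilon$; extending to mixed deviations $g_i \in \Delta_i$ by linearity of $U_i$ in its first argument shows $f^n\in C_\varepsilon$. Thus the $v_i^m$-characterization of $C_\varepsilon$ underpinning Lemma \ref{IR4} continues to hold with $U_\Phi$ in place of $U$, and the final chain (Lemmas \ref{IR1}, \ref{IR4}, \ref{IR7}, \ref{IR3}) delivers $d(\bar q^n(t),C)\to 0$.

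I expect the main obstacle to be conceptual rather than computational: the ascent argument needs a single scalar function that increases along the trajectory, yet the players optimize no common objective --- each maximizes her own $U_i$. The potential $U_\Phi$ supplies that missing function, and the whole proof hinges on the two uses of the potential identity above, namely that best responding to $U_i$ coincides with best responding to $U_\Phi$ (so the ascent on $U_\Phi$ is valid despite decentralized optimization) and that first-order stationarity of $U_\Phi$ certifies an approximate Nash equilibrium of the original game (so $\beta_t\to 0$ in Ces\`aro mean forces $\bar q^n(t)$ toward $C$). The care required is only in confirming that both identities survive the mixed extension and the $\hat q_i$-versus-$\bar q$ estimation error, neither of which introduces a genuinely new difficulty beyond what \textbf{A.\ref{ass3}} already controls.
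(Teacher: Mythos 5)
Your proposal is correct, and its mathematical core matches the paper's, but the execution differs enough to be worth contrasting. The paper proves Theorem \ref{thrm3} as a two-line black-box reduction: it introduces the auxiliary identical-interest game $\Gamma_2$ in which every player's utility is the potential $\Phi$, observes that $\Gamma_1$ and $\Gamma_2$ are best-response equivalent and that their consensus-equilibrium sets coincide (citing the literature for both facts), and then simply applies Theorem \ref{thrm2} to $\Gamma_2$, repeating no line of the ascent analysis. You instead inline that reduction: you rerun the whole ascent argument with the mixed extension $U_\Phi$ of the potential in the role of the common utility, and your "two uses of the potential identity" are precisely the two facts the paper outsources --- that $\arg\max_{\alpha_i} U_i(\alpha_i,\hat q_{-i}(t))$ coincides with $\arg\max_{\alpha_i} U_\Phi(\alpha_i,\hat q_{-i}(t))$ (best-response equivalence, which legitimizes the ascent on $U_\Phi$ despite decentralized optimization), and that approximate stationarity of $U_\Phi$ at a consensus point certifies membership in $C_\varepsilon$ of the original game (equilibrium coincidence, in its $\varepsilon$-version). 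The paper's route is shorter; yours buys transparency on one point the reduction handles silently: the black-box argument implicitly needs the $\varepsilon$-consensus-equilibrium sets of $\Gamma_1$ and $\Gamma_2$ to agree so that the $C_\varepsilon$-based machinery (Lemmas \ref{IR4}--\ref{IR3}) transfers, which the paper gets for free by applying Theorem \ref{thrm2} wholesale and which you verify explicitly via the potential identity plus linearity of $U_i$ in the player's own strategy. Both routes rest on permutation invariance of $\Phi$ (\textbf{A.\ref{ass6}}) to make the term-rearrangement and the symmetric characterization of $C_\varepsilon$ work, and on \textbf{A.\ref{ass1}} so the centroid is well defined; you correctly retain both.
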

\begin{IEEEproof}
Let $\Gamma_1 = \left( N,Y,\{U_i\}_{i \in N}\right)$ be an exact potential game with potential function $\Phi$.  Let $\Gamma_2 = \left( N,Y,\{\tilde U_i\}_{i \in N}\right)$ be a game with the same set of players and actions as $\Gamma_1$, but with all players using $\Phi$ as their utility function ($\tilde U_i = \Phi,~\forall i$). Let $C_{\Gamma_1}$ and $C_{\Gamma_2}$ be the set of consensus equilibria in $\Gamma_1$ and $\Gamma_2$ respectively.  Let $\bar q_{\Gamma_1}(t),~\bar q_{\Gamma_2}(t)$ be the average empirical distributions corresponding to ECFP processes in $\Gamma_1$ and $\Gamma_2$ respectively. Note that the set of consensus equilibria in $\Gamma_1$ and $\Gamma_2$ coincide \cite{Mond96}.  Also note that $\Gamma_1$ and $\Gamma_2$ are best response equivalent \cite{Voor00}, therefore a valid ECFP process for $\Gamma_1$ is a valid ECFP process for $\Gamma_2$, and vice versa. $\Gamma_2$ is a game with identical action spaces and identical permutation-invariant utility functions and therefore falls within the purview of Theorem \ref{thrm2}. By Theorem \ref{thrm2}, $d(\bar q_{\Gamma_2}^n(t),C_{\Gamma_2}) \rightarrow 0$. By best response equivalence, any valid ECFP process in $\Gamma_1$ is a valid ECFP process in $\Gamma_2$, therefore $d(\bar q_{\Gamma_1}^n(t),C_{\Gamma_2}) \rightarrow 0$. Since $C_{\Gamma_1}$ and $C_{\Gamma_2}$ coincide, $d(\bar q_{\Gamma_1}^n(t),C_1) \rightarrow 0$.
\end{IEEEproof}
Potential games are studied in \cite{Mond96}. A game that admits an exact potential function is known as an exact potential game.  The class of exact potential games includes congestion games \cite{Rose73}.  Congestion games have many useful applications in economics and engineering. We present an example of a congestion game in a distributed cognitive radio context in Section VIII.

\subsection{ECFP in Potential Games with Permutation Invariant Classes}
The ECFP algorithm may be generalized to a setup where players track multiple centroids, each centroid corresponding to a different class of players. This generalization allows for ECFP to be considered in more general classes of games and allows for FP to be considered as special case of ECFP.
Formally, assume
\begin{assumption}
$\Gamma$ is an exact potential game with potential function $\Phi$.
\label{ass12}
\end{assumption}
Let the players be partitioned into classes as follows. For $m\leq n$, let $I = \{1,\ldots,m\}$, and let $\mathcal{P}=\{P_1,\ldots,P_m\}$ be a collection of subsets of $N$; i.e. $P_k \subseteq N, ~\forall k\in I$. A collection $\mathcal{P}$ is said to be a \textit{permutation invariant partition} of $N$ if,\\
$(i)$ $P_k \cap P_\ell = \emptyset$, for $k,\ell \in I$, $k\not= \ell$, \\
$(ii)$ $\bigcup\limits_{k\in I} P_k = N$,\\
$(iii)$ for $k\in I$, $i,j\in P_k$, $Y_i = Y_j$,\\
$(iv)$ for $k\in I$, $i,j\in P_k$, there holds for any strategy profile $y' \in Y_i, ~y'' \in Y_j, ~ y_{-(i,j)} \in Y_{-(i,j)}$,
$$ \Phi([y']_i,[y'']_j,y_{-(i,j)}) = \Phi([y'']_i,[y']_j,y_{-(i,j)}).$$
\begin{remark}
A partition which places a single player in each class is always a valid permutation invariant partition. In this case, the resultant ECFP process will be equivalent to FP. Moreover, if $\Phi$ is permutation invariant, then all players may be partitioned into a single class; in this case, the resultant ECFP process will be equivalent to that  presented in Section \ref{sec6a}.
\end{remark}

For a collection $\mathcal{P}$, define $\psi(i):N\rightarrow I$ to be the unique mapping such that $\psi(i) = k$ if and only if $i\in P_k$.

Given a permutation invariant partition $\mathcal{P}$, let the set of symmetric Nash equilibra (SNE) relative to $\mathcal{P}$ be given by,\footnote{A symmetric equilibrium is typically defined as a Nash equilibrium that is invariant under any automorphism of the game. Note that, in contrast to this, the definition of SNE given here has the additional constraint that it is defined relative to the particular partition $\mathcal{P}$. If $\mathcal{P}$ is such that no player is permutation equivalent to a player in another class, then the two concepts coincide. Furthermore, if $\mathcal{P}$ is such that there is one player in each class, then the set of SNE relative to $\mathcal{P}$ will coincide with the set of NE.}
\begin{equation}
SNE := \{p\in NE: \forall~ k\in I, p_i = p_j \forall i,j \in P_k\}.
\end{equation}

For $k\in I$, define\footnote{The terms $\bar g(t)$, $\hat g_i(t)$, etc. as defined in this section are analogous to the terms defined in Table \ref{table_ECFP}. The notation is changed from $q$'s to $g$'s in this section to emphasize the differences in definition for the generalized setup.}
$$\bar g^k(t) := |P_k|^{-1}\sum_{i\in P_k} q_i(t)$$
to be the \textit{$k$th empirical centroid distribution} relative to $\mathcal{P}$. Likewise, define $\bar g(t) = (\bar g_1(t),\ldots,\bar g_n(t))$
where $\bar g_i(t) = \bar g^{\psi(i)}(t)$, to be the \textit{composite empirical centroid distribution} relative to $\mathcal{P}$.

As before, we consider a scenario where players do not have precise knowledge of the centroid distribution. Let $\hat g_i^k(t) $ be the estimate which player $i$ maintains of the $k$'th centroid $\bar g^k(t)$.  Formally, assume
\begin{assumption}
$\|\hat g_i^k(t) - \bar g^k(t)\| = O\left(\frac{\log(t)}{t}\right),~ \forall i\in \mathbb{N},~\forall k\in I$.
\label{ass13}
\end{assumption}

In this context, we say a sequence of actions $\{a(t)\}_{t\geq 1}$ is an ECFP process (with respect to $\mathcal{P})$ if
$$ U_i(a_i(t), \hat g_{-i}(t)) = \max_{\alpha_i\in A_i} U_i(\alpha_i ,\hat g_{-i}(t)),$$
where,
$$\hat g_{-i}(t)  = (\hat g_i^{\psi(1)}(t),\ldots,\hat g_i^{\psi(i-1)}(t),g_i^{\psi(i+1)}(t),\ldots\hat g_i^{\psi(n)}(t)).$$
The following theorem asserts that an ECFP process will converge, in this generalized setup, to the set of $SNE$.
\begin{theorem}
Let $\Gamma$ be a potential game, let $\mathcal{P}$ be a permutation invariant partition of the player set $N$, let  $\{a(t)\}_{t\geq 1}$ be an ECFP process with respect to $\mathcal{P}$, and assume \textbf{A.\ref{ass12}} -- \textbf{A.\ref{ass13}} hold. Then $d(\bar g(t),SNE)\rightarrow 0$ as $t\rightarrow\infty$.
\end{theorem}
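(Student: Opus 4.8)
The plan is to run the argument of the proof of Theorem \ref{thrm2} almost verbatim, after two substitutions: replace the common utility $U$ by the potential $\Phi$, and replace the single centroid $\bar q(t)$ by the $m$-tuple of class centroids $(\bar g^1(t),\ldots,\bar g^m(t))$. First I would strip away the potential-game layer. Because $\Gamma$ is an exact potential game, for each player $i$ and each fixed opponent profile $\arg\max_{\alpha_i\in A_i}U_i(\alpha_i,\cdot)=\arg\max_{\alpha_i\in A_i}\Phi(\alpha_i,\cdot)$, the identity persisting for mixed opponent strategies since $\Phi$ is extended multilinearly. Hence any ECFP process with respect to $\mathcal{P}$ in $\Gamma$ is, step for step, an ECFP process with respect to $\mathcal{P}$ in the auxiliary game $\tilde\Gamma$ in which every player uses $\Phi$ as utility; moreover $NE(\Gamma)=NE(\tilde\Gamma)$ (a standard fact for potential games \cite{Mond96}) and the within-class symmetry constraint defining $SNE$ is the same in both games, so $SNE(\Gamma)=SNE(\tilde\Gamma)$. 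It therefore suffices to prove $d(\bar g(t),SNE)\to0$ in $\tilde\Gamma$, where $\Phi$ plays the role of the common, within-class permutation-invariant utility.

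Working in $\tilde\Gamma$, I would track $\Phi(\bar g(t))$. With $\bar a^k(t+1):=|P_k|^{-1}\sum_{i\in P_k}a_i(t+1)$ each class centroid obeys $\bar g^k(t+1)=\bar g^k(t)+\tfrac{1}{t+1}(\bar a^k(t+1)-\bar g^k(t))$, so the composite centroid satisfies $\bar g(t+1)=\bar g(t)+\tfrac{1}{t+1}(\bar a(t+1)-\bar g(t))$ with $\bar a_i(t+1):=\bar a^{\psi(i)}(t+1)$. Expanding $\Phi(\bar g(t+1))$ by multilinearity, writing out the first-order terms and collecting all higher-order terms into a remainder $\zeta(t+1)$ with $|\zeta(t+1)|\le M(t+1)^{-2}$ (possible because $\Phi$ is bounded on the relevant compact set), I obtain
\begin{equation}
\Phi(\bar g(t+1))-\Phi(\bar g(t))+\frac{M}{(t+1)^2}\ge\frac{1}{t+1}\sum_{i=1}^n\Big(\Phi(\bar a_i(t+1),\bar g_{-i}(t))-\Phi(\bar g_i(t),\bar g_{-i}(t))\Big),
\end{equation}
the exact analogue of \eqref{thrm1_eq1}.

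The one genuinely new step -- and the one I expect to be the crux -- is rewriting the class-averaged first-order sum as a sum of honest per-player best-response values, and this is where the partition hypotheses enter. Using $\bar a_i(t+1)=|P_{\psi(i)}|^{-1}\sum_{j\in P_{\psi(i)}}a_j(t+1)$ and multilinearity in the $i$th slot gives $\Phi(\bar a_i(t+1),\bar g_{-i}(t))=|P_{\psi(i)}|^{-1}\sum_{j\in P_{\psi(i)}}\Phi([a_j(t+1)]_i,\bar g_{-i}(t))$. For $i,j$ in a common class $k$, positions $i$ and $j$ of the baseline both carry the centroid $\bar g^k(t)$, so transferring the action $a_j(t+1)$ from slot $i$ to slot $j$ is a within-class transposition of arguments; condition $(iv)$ of the permutation-invariant partition (with $(iii)$ ensuring the two slots share an action space) then yields $\Phi([a_j(t+1)]_i,\bar g_{-i}(t))=\Phi([a_j(t+1)]_j,\bar g_{-j}(t))$. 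Summing over $i\in P_k$ the inner summand becomes independent of $i$, the factor $|P_k|$ cancels $|P_k|^{-1}$, and summing over the classes collapses the double sum to $\sum_{j=1}^n\Phi(a_j(t+1),\bar g_{-j}(t))$. The obstacle here is purely in the bookkeeping: one must carry the class sizes through and check the transposition is legitimate for every pair, which is exactly what $(iii)$ and $(iv)$ guarantee; this mirrors, but generalizes, the rearrangement performed in the proof of Theorem \ref{thrm2}.

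With this identity the remainder is a transcription of the Theorem \ref{thrm2} argument. I would set $v_i(\bar g(t)):=\max_{\alpha_i\in A_i}\Phi(\alpha_i,\bar g_{-i}(t))-\Phi(\bar g(t))\ge0$ and introduce $L_i(t+1)$ measuring the gap between $a_i(t+1)$, which is a best response to the \emph{estimated} centroids $\hat g_{-i}(t)$, and a best response to the \emph{true} centroids $\bar g_{-i}(t)$. Local Lipschitz continuity of $\Phi$ on the compact set furnished by the estimation-error assumption \textbf{A.\ref{ass13}}, together with $\|\hat g_i^k(t)-\bar g^k(t)\|=O(\log t/t)$, gives $L_i(t+1)=O(\log t/t)$ and $|v_i(\hat g_{-i}(t))-v_i(\bar g_{-i}(t))|=O(\log t/t)$, so after division by $t$ these error contributions are summable. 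Summing the displayed inequality over $t$ and using boundedness of $\Phi$ then forces $\sum_t\beta_t/t<\infty$ with $\beta_t:=\sum_{i}v_i(\bar g(t-1))\ge0$; the chain of Lemmas \ref{IR1}, \ref{IR4}, \ref{IR7}, \ref{IR3} -- read with $SNE$ and its $\varepsilon$-relaxation in place of $C$ and $C_\varepsilon$, since $\beta_t$ small certifies that $\bar g(t)$ is an approximate $SNE$ -- converts the Cesàro statement $\tfrac1T\sum_{t\le T}\beta_t\to0$ into $d(\bar g(t),SNE)\to0$, as claimed.
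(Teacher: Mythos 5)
Your proposal is correct and is exactly the argument the paper intends: the paper omits this proof, stating only that it ``follows the same reasoning as the proof of Theorem \ref{thrm2},'' and your write-up supplies precisely those details --- the reduction to the auxiliary game with common utility $\Phi$ (as in the proof of Theorem \ref{thrm3}), the multilinear expansion with an $O(t^{-2})$ remainder, and the within-class transposition argument via conditions $(iii)$--$(iv)$ generalizing the single-class rearrangement of Theorem \ref{thrm2}. Your handling of the chain of appendix lemmas is also sound, including the implicit observation that the $SNE_\varepsilon$ analogue of Lemma \ref{IR4} needs only nonnegativity of the per-player terms (with the threshold adjusted), since the composite centroid $\bar g(t)$ is within-class symmetric by construction.
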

The proof of this result follows the same reasoning as the proof of Theorem \ref{thrm2} and is omitted here for brevity.

A distributed-information implementation of ECFP in this generalized setup may be achieved in a manner analogous to that of Section \ref{sec4}, with the primary difference that, in this context, players exchange estimates for each empirical centroid distribution $\bar g^{k}(t)$, $k\in I$.

\section{Experimental Results}
\label{sec7}
In this section we illustrate the operation of distributed-ECFP by implementing it in a cognitive radio application.
%Moreover, in order to demonstrate the broader applicability of the ECFP setup, we also present an implementation of distributed-ECFP in a traffic routing application. However, due to space constraints, we do not carry out a detailed empirical analysis of the latter.
\subsection{Cognitive Radio Setup}
Let $Ch$ indicate a finite collection of permissible frequency channels. Assume there are two classes of users sharing the allocated set of channels: primary users and secondary users. Assume each primary user has been assigned to a fixed channel from which they may not deviate. Secondary users are free to use any channel they wish. The objective in this setup is for the secondary users to cooperatively learn a channel allocation that is both fair and in some sense optimal.

Cast this setup in the format of a normal form game $\Gamma = (N,(Y_i)_{i\in \mathbb{N}}, (u_i(\cdot))_{i\in\mathbb{N}})$ with $N$ being the set of secondary users, and $Y_i = Ch$ for all $i$. Let $\sigma_r(y)$ (respectively, $\sigma_r(y_{-i})$) denote the set of users on channel $r\in Ch$ for the joint strategy $y\in Y^n$ ($y_{-i} \in Y_{-i}$).

The cost associated with channel $r$ when $k$ users are on channel $r$ is given by $c_r(k)$. The utility for player $i$ is given by $u_i(y) = -c_{y_i}(\sigma_{y_i}(y))$,
and the mixed utility is given by the usual multilinear extension. The game $\Gamma$ is an instance of a congestion game---a known subset of potential games---and hence is amenable to ECFP  by Theorem \ref{thrm3}.

\subsubsection{Communication Graph Setup}
We assume that some small portion of spectrum is allocated for the purpose of transmitting data pertinent to the learning algorithm (i.e., disseminating information about the empirical centroid $\bar q(t)$.)
Such an assumption is reasonable when the communication overhead associated with the learning algorithm is relatively small compared to the objective data being transmitted, e.g., users objective is to transmit large video files.

We model user-to-user communication using a geometric random graph. In implementing the ECFP algorithm of Section \ref{sec5}, we assign the weight constants $w_{i,j}$ of \eqref{update_eq_1} according to the Metropolis-Hastings rule \cite{chib1995understanding}.

\subsection{Best Response Computation}
\label{sec_BR_computation}
On the surface, the ECFP best response calculation \eqref{v_m_eq} appears to have the same complexity as the FP best response calculation \eqref{FP_BR}.  However, the symmetry inherent in the distribution $\bar q_{-i}(t)$ used in the ECFP best response calculation leads to mitigations in computational complexity. We contrast the FP and ECFP best response computations for the case of the cognitive radio game.
\subsubsection{ECFP Best Response Computation}
In order to choose a best response in ECFP, a player must compute\footnote{In the ECFP best response \eqref{v_m_eq}, a player must maximize the mixed utility $\max_{\alpha_i \in A_i} U(\alpha_i,\bar q_{-i}(t))$. Recall that the mixed utility \eqref{mixed_U} is the expected value of $u(\cdot)$ given that players are using probabilistic strategies $\bar q_{-i}(t)$. Thus, maximizing the mixed utility of \eqref{v_m_eq} is equivalent to maximizing the expected value below.}
$$\arg\max_{y_i \in Y_i} E_{\bar q_{-i}(t)}[u(y_i,y_{-i})],$$
where $y_{-i}$ is a random variable with distribution $\bar q_{-i}(t)$.
The symmetry in the game allows for the following simplification,
\begin{equation}
E_{\bar q_{-i}(t)}[u(y_i,y_{-i})] = \sum_{k=0}^{n-1} c_{y_i}(k+1) \mathbb{P}_{\bar q_{-i}(t)}(\sigma_{y_i}(y_{-i}) = k).
\label{ECFP_congestion_BR}
\end{equation}
In the above, players only need to compute the probability associated with having $k$ users on each channel rather than computing the probability of every possible configuration of users.

From here, the symmetry in the i.i.d. distribution $\bar q_{-i}(t)$ allows for further simplifications. Let $y_i = r\in Ch$ and note that\footnote{Recall that the notation $\bar q(t,r)$ refers to the $r$th element of the vector $\bar q(t)$. In ECFP, players (incorrectly) assume that all opponents are independently using the identical mixed strategy $\bar q(t)$. Under this assumption, the probability of any given opponent using channel $r$ is given by $\bar q(t,r)$.}
{\small
\begin{align}
\mathbb{P}_{\bar q_{-i}(t)}(\sigma_r(y_{-i}) = 0) & = (1-\bar q(t,r))^{n-1}\\
\mathbb{P}_{\bar q_{-i}(t)}(\sigma_r(y_{-i}) = 1) & = (n-1)\bar q(t,r)(1-\bar q(t,r))^{n-2}\\
\mathbb{P}_{\bar q_{-i}(t)}(\sigma_r(y_{-i}) = 2) & = \binom{n-1}{2}\bar q(t,r)^2(1-\bar q(t,r))^{n-3}.
\end{align}
}
As the above pattern suggests, the probability is binomial---for $0\leq k \leq (n-1)$,
{\small
$$\mathbb{P}_{\bar q_{-i}(t)}(\sigma_{y_i}(y_{-i}) = k) = \binom{n-1}{k}\bar q(t,r)^k(1-\bar q(t,r))^{n-1-k}.$$
}
Thus, the requisite probability is given by a computationally simple closed form expression, and the expected utility can be easily computed using \eqref{ECFP_congestion_BR}. Furthermore,
since players compute a best response each iteration, they reap the computational benefits of using this simplified form for $\mathbb{P}_{q_{-i}(t)}(\sigma_{y_i}(y_{-i}) = k)$ on each iteration of ECFP.

\subsubsection{FP Best Response Computation}
In order to choose a best response in FP, a player must compute
$$\arg\max_{y_i \in Y_i} E_{q_{-i}(t)}[u(y_i,y_{-i})],$$
where $y_{-i}$ is a random variable with distribution $q_{-i}(t)$.
As before, the symmetry in the game allows for a simplification to
$$E_{q_{-i}(t)}[u(y_i,y_{-i})] = \sum_{k=0}^{n-1} c_{y_i}(k+1) \mathbb{P}_{q_{-i}(t)}(\sigma_{y_i}(y_{-i}) = k).$$
However, due to the lack of structure in $q_{-i}(t)$, no further simplifications are possible. To illustrate the complexity of this, let $y_i = r\in Ch$ and note that
{\small
\begin{align}
\mathbb{P}_{q_{-i}(t)}(\sigma_r(y_{-i}) = 0) & = \prod_{j\not = i}(1-q_{j}(t,r))^{n-1}\\
\mathbb{P}_{q_{-i}(t)}(\sigma_r(y_{-i}) = 1) & = \sum_{j_1\not = i} q_{j_1}(t,r)\prod_{j_2 \not = i,j_1}(1-q_{j_2}(t,r))^{n-2}\\
\mathbb{P}_{q_{-i}(t)}(\sigma_r(y_{-i}) = 2) & =\\
  \sum_{j_1 \not= i}\sum_{j_2 > j_1} q_{j_1}(t,r)&q_{j_2}(t,r)\prod_{j_3\not = i,j_1,j_2}(1-q_{j_3}(t,r))^{n-3}.
\end{align}}
In general, when the $q(t)$ corresponds to a mixed strategy, the complexity of evaluating $\mathbb{P}_{q_{-i}(t)}(\sigma_r(y_{-i}) = k)$ grows combinatorially  with $k$---even in this game with symmetric payoffs.

\subsection{Simulation Results}
We simulated ECFP in two different cognitive radio scenarios. In the first, there are $10$ channels and $400$ users, and the cost function for channel $r\in Ch$ is given by a cubic polynomial of the form
$$c_r(k) = a_3k^3 + a_2k^2 + a_1k + a_0,$$
where $k$ is the number of users on channel $r$ and $a_j$, $0\leq j\leq 3$ are arbitrary coefficients. Figure \ref{fig_util} show a plot of utilities $U(\bar q^n(t))$ and $U((\hat q_1(t))^n)$ in the centralized and distributed cases respectively.\footnote{The notation $(\hat q_i(t))^n$ signifies the $n$-tuple containing $n$ repeated copies of $\hat q_i(t)$. } The choice of the distribution of player $1$, $\hat q_1(t)$, to represent the distributed case was arbitrary---$\hat q_i(t)$ for any $i\in N$ produces a similar results. In the distributed case, players communicated via a randomly generated geometric graph with average node degree of $8.78$.

Both the centralized and distributed algorithms were started with identical initial conditions. It is interesting to note that, while multiple NE do exist, both algorithms tend to converge to the same equilibrium, regardless of the communication graph topology. This trend suggests that the basin of attraction for any given NE is similar for both centralized ECFP and distributed ECFP. Neither algorithm is noticeably superior in terms of the quality of equilibria attained.

A useful feature of consensus NE (CNE) in this setup is their adaptability to players entering or exiting the game. If a new player enters the game after an ECFP learning process has been running for some time, then incumbent players can simply inform the new player of the current empirical distribution $\bar q(t)$, and the distribution $\bar q^{n+1}(t)$ (meaning the $(n+1)$ tuple which contains repeated copies of $\bar q(t)$) will be an approximate CNE in the newly formed $(n+1)$ player game. Similarly, if a player exits the game, the distribution $\bar q^{n-1}(t)$ will be an approximate CNE in the newly formed $(n-1)$ player game.

In the second cognitive radio scenario simulated, there are $10$ channels, each with a quadratic cost function. This choice of cost functions guarantees the existence of a unique CNE. We simulated distributed-ECFP in this scenario for the cases of $50$, $200$, and $500$ users; each case had different randomly generated cost functions. In each case, the communication graph was generated as a random geometric graph. The average node degree for the associated communication graph in each case was $8.04$, $8.72$, and $8.98$ respectively.

Figure \ref{fig_dist} shows a plot of the normalized distance of $(\hat q_1(t))^n$ to the unique $NE$ in each case (the particular choice of $\hat q_1(t)$ again being arbitrary). Distance was measured using the Euclidean norm, normalized by $\sqrt{n}$, where $n$ is the number of players. Simulation results suggest that the convergence rate of ECFP is independent of the number of players. Indeed, the analytical properties of ECFP (in general, see Section VII) suggest that the convergence is dependent only on the number of permutation invariant classes into which the player set is partitioned and not the overall number of players. A rigorous characterization of the precise nature of this relationship may be an interesting topic for future research.

%\begin{figure} [htb]
%\includegraphics[totalheight=.22\textheight,
%width=.24\textwidth,viewport=0 0 250 300]{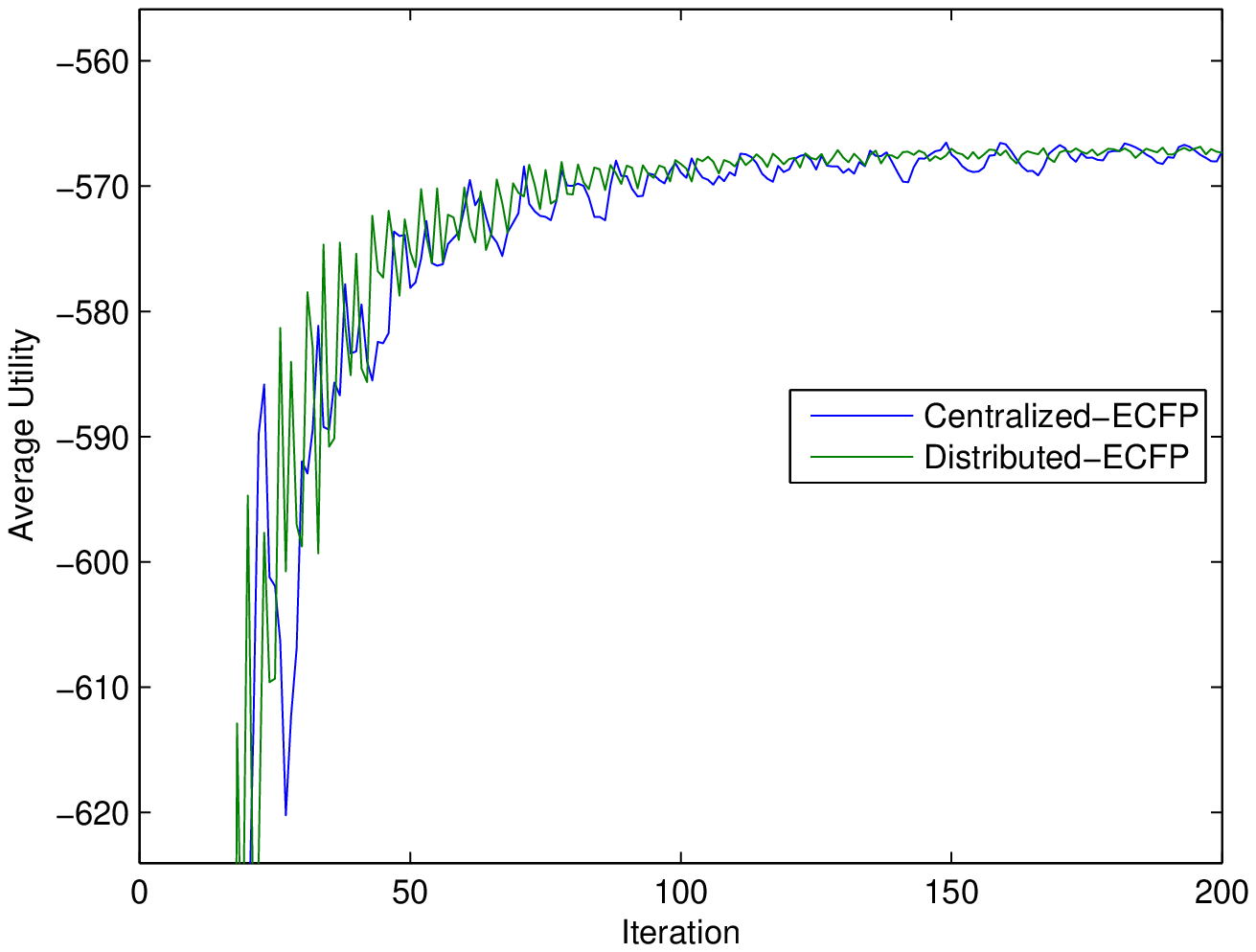}
%\caption{The average utility (taken over the set of players) of the joint empirical distribution, $\bar q^n(t)$.}
%\label{fig_util}
%\end{figure}
%\begin{figure} [htb]
%\includegraphics[totalheight=.22\textheight,
%width=.24\textwidth,viewport=0 0 250 300]{figures/distance_to_NE.eps}
%\caption{The distance of the joint empirical distribution to the set of NE.}
%\label{fig_dist}
%\end{figure}

\begin{figure}[t]
\centering
 \vspace{-0.2cm}
    \subfigure[]{\includegraphics[width=4.3cm]{figures/utility_plot.eps}\label{fig_util}}\hspace{0.1cm}
    \subfigure[]{\includegraphics[width=4.3cm]{figures/distance_to_NE.eps}\label{fig_dist}}
\caption{(a) The average utility (taken over the set of players) of the joint empirical distribution, $\bar q^n(t)$; and (b) The distance of the joint empirical distribution to the set of NE.}\label{Figures2}
\end{figure}

\section{Conclusions}
\label{sec8}
We have introduced a variant of the well-known FP algorithm that we call empirical centroid fictitious play (ECFP).  Rather than track and respond to the empirical distribution of each opposing player, as in FP, ECFP requires that players track only the centroid of the marginal empirical distributions and compute a best response with respect to this same quantity. The memory problem associated with FP in large-scale games is mitigated by requiring players to track a distribution which is invariant to the number of players in the game. The problem of computational complexity is mitigated by the introduction of symmetry into the best response calculation. ECFP is shown to converge to the a subset of the Nash equilibria (the consensus equilibria), for potential games with permutation invariant potential functions.

In addition, we have presented a general formulation of ECFP where the player set is partitioned into classes and players track one centroid for each class.

We have introduced a distributed-information learning framework wherein it is assumed that players are unable to directly observe the actions of others but may communicate with a local subset of neighboring players via an underlying communication infrastructure. We presented an implementation of ECFP in this framework which mitigates all three problems (communication, memory, and computational complexity) associated with FP in large-scale games.

An interesting future research direction will be to investigate the convergence rate of ECFP in terms of both the number of players and the number of classes into which the player set is partitioned. It would also be of interest to investigate a distributed-information implementation of the ECFP algorithm within other communication infrastructures (e.g., random link failures, asynchronous communications).

\section*{Appendix}
{\small
\subsection{Distributed averaging in dynamic networks}\label{dist_avg_sec}

This appendix concerns topics in distributed consensus in networks where node values are dynamic quantities.  The results of this section are used to prove convergence of the distributed algorithms presented in section \ref{dist_mfp}.  Results in this section are similar to results on distributed averaging in networks with additive changes in node values and information dynamics in \cite{Raj01, kar2011convergence, chen2012diffusion}.  For a survey of traditional consensus and gossip algorithms, the reader may refer to  \cite{olfati2007consensus,shah_book,dimakis2010gossip}.

Consider a network of $n$ nodes connected through a communication graph $G=(V,E)$.  The graph is assumed to be connected.  Let $x_i(t)\in\mathbb{R}$ be the value of node $i$ at time $t$, and let $x(t)\in\mathbb{R}^n$ be the vector of values at all nodes.  The goal is for each node to track the instantaneous average $\bar x(t) = \frac{1}{n}\sum_{i=1}^n x_i(t)$, $\bar x(t) \in \mathbb{R}$, given that the value at each node $x_i(t)$ is time varying.   Let $\delta_i(t) = x_i(t+1) - x_i(t)$ be the change in the value at node $i$, and $\delta(t) = x(t+1) - x(t)$ be the vector of changes at all nodes, $\delta(t) \in \mathbb{R}^n$.  Suppose the magnitude of the change at time $t$ is bounded by $\vert \delta_i(t) \vert = \vert x_i(t+1) - x_i(t) \vert \leq \epsilon(t) ~\forall i$.  We make the following assumption:
\begin{assumption}
The sequence $\{\varepsilon(t)\}_{t=0}^\infty$ is monotone non-increasing.
\label{ass9}
\end{assumption}

Let $\hat x_i(t)\in\mathbb{R}$ be the estimate of $\bar x(t)$ at node $i$ and let $\hat x(t)\in\mathbb{R}^n$ be the vector of estimates.  We make the following assumption pertaining to the initial error in players' estimates.
\begin{assumption}
$\hat x_i(0) - \bar x(0) = 0 ~\forall i$.
\label{ass10}
\end{assumption}
Let the average be estimated using the update rule
\begin{align}
\hat x(t+1) = W \left( \hat x(t) + x(t+1) - x(t) \right),
\label{update_rule1}
\end{align}
where the matrix $W\in\mathbb{R}^{n\times n}$ is aperiodic, irreducible, and doubly stochastic with sparsity conforming to $G$.  The following Lemma gives a bound for the error in the estimates of $\bar x(t)$.
\begin{lemma}
Let the sequence $\{\hat x(t)\}_{t=1}^\infty$ be computed according to \eqref{update_rule1} such that assumptions \textbf{A.\ref{ass11}}, \textbf{A.\ref{ass5}}, and \textbf{A.\ref{ass10}} hold and let the incremental change in $x(t)$ be bounded according to assumption \textbf{A.\ref{ass9}}.  Then the error at any time $t$ is bounded by,
$$\| \hat x(t) - \bar x(t) \ones \| \leq \frac{2\sqrt{n}}{1-\lambda}\epsilon_{avg}(t), $$
where $\lambda = \sup\limits_{y \in \mathbb{R}^n : \sum\limits_i y_i = 0} \frac{\| Wy \|}{\|y\|}$, and $\epsilon_{avg}(t) = \frac{1}{t}\sum\limits_{\tau=0}^{t-1} \epsilon(\tau)$ is the time average of $\{ \epsilon(\tau) \}_{\tau=0}^{t-1}$.
\label{lemma4}
\end{lemma}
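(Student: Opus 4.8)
The plan is to track the \emph{disagreement vector} $e(t) := \hat x(t) - \bar x(t)\ones$ and show that it contracts geometrically under $W$ while being driven by an increment whose size is controlled by $\epsilon(t)$. First I would record two structural facts. Since $W$ is doubly stochastic, $\ones^T W = \ones^T$, so applying $\tfrac1n\ones^T$ to the update rule \eqref{update_rule1} gives $\tfrac1n\ones^T\hat x(t+1) = \tfrac1n\ones^T\hat x(t) + (\bar x(t+1)-\bar x(t))$; combined with the initialization \textbf{A.\ref{ass10}} this yields $\tfrac1n\ones^T\hat x(t) = \bar x(t)$ for all $t$, i.e. $e(t)\perp\ones$ at every step. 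Second, because $W$ is doubly stochastic, aperiodic and irreducible on a connected graph (\textbf{A.\ref{ass11}}, \textbf{A.\ref{ass5}}), the induced gain $\lambda$ on the subspace $\{y:\sum_i y_i=0\}$ satisfies $\lambda<1$ (a standard fact from the cited consensus references), and that subspace is $W$-invariant; these are exactly the facts that make the contraction below meaningful.

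Next I would derive an exact affine recursion for $e(t)$. Writing $J:=\tfrac1n\ones\ones^T$ and $\delta(t)=x(t+1)-x(t)$, substituting $\hat x(t)=e(t)+\bar x(t)\ones$ into \eqref{update_rule1} and using $W\ones=\ones$ together with $\bar x(t+1)-\bar x(t)=\tfrac1n\ones^T\delta(t)$ gives
\begin{equation}
e(t+1) = W e(t) + (W-J)\delta(t).
\end{equation}
The driving term lies in the invariant subspace, since $\ones^T(W-J)=\ones^T-\ones^T=0$, so iterating from $e(0)=0$ (guaranteed by \textbf{A.\ref{ass10}}) keeps every summand in $\{y:\sum_i y_i=0\}$:
\begin{equation}
e(t) = \sum_{\tau=0}^{t-1} W^{t-1-\tau}(W-J)\delta(\tau).
\end{equation}
Applying the contraction $\|Wy\|\le\lambda\|y\|$ on that subspace to each term, together with the crude but clean estimate $\|(W-J)\delta(\tau)\|\le(\|W\|+\|J\|)\,\|\delta(\tau)\|\le 2\sqrt n\,\epsilon(\tau)$ (using $\|\delta(\tau)\|\le\sqrt n\,\epsilon(\tau)$, which follows from $|\delta_i(\tau)|\le\epsilon(\tau)$ and the operator norms $\|W\|=\|J\|=1$), I obtain
\begin{equation}
\| e(t)\| \le 2\sqrt n \sum_{\tau=0}^{t-1}\lambda^{t-1-\tau}\epsilon(\tau).
\end{equation}

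The remaining and genuinely essential step is to convert this geometrically weighted sum of the $\epsilon(\tau)$ into the time average $\epsilon_{avg}(t)$, and this is precisely where monotonicity (\textbf{A.\ref{ass9}}) enters. Re-indexing as $\sum_{k=0}^{t-1}\lambda^k\,\epsilon(t-1-k)$, the weights $\lambda^k$ are non-increasing in $k$ while the factors $\epsilon(t-1-k)$ are non-decreasing in $k$ (as $\epsilon$ is non-increasing); Chebyshev's sum inequality for oppositely ordered sequences then gives $\tfrac1t\sum_k\lambda^k\epsilon(t-1-k)\le\big(\tfrac1t\sum_k\lambda^k\big)\big(\tfrac1t\sum_k\epsilon(t-1-k)\big)$, so that
\begin{equation}
\sum_{\tau=0}^{t-1}\lambda^{t-1-\tau}\epsilon(\tau) \le \Big(\sum_{k=0}^{t-1}\lambda^k\Big)\,\epsilon_{avg}(t)\le\frac{1}{1-\lambda}\,\epsilon_{avg}(t).
\end{equation}
Combining the last two displays delivers the claimed bound $\|e(t)\|\le\frac{2\sqrt n}{1-\lambda}\epsilon_{avg}(t)$.

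I expect the main obstacle to be exactly this last conversion. Without \textbf{A.\ref{ass9}}, a late surge of $\epsilon(\tau)$ near time $t$ (which receives weight close to $1$) could dominate the weighted sum while contributing almost nothing to $\epsilon_{avg}(t)$, so some monotonicity input is indispensable; the clean appeal to Chebyshev's sum inequality is what yields the stated constant $\tfrac{2\sqrt n}{1-\lambda}$ rather than the looser constant one would get by crudely splitting the sum into ``old'' and ``recent'' blocks. A minor point to handle carefully is the $t=0$ versus $t\ge1$ indexing in verifying $e(0)=0$, which is where \textbf{A.\ref{ass10}} is used.
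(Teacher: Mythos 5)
Your proof is correct and follows essentially the same route as the paper: the error recursion you derive, $e(t+1)=We(t)+(W-J)\delta(t)$, is identical to the paper's $e(t+1)=W\bigl(e(t)+\xi(t)\bigr)$ with $\xi(t)=\delta(t)-\bar\delta(t)\ones$ (since $W\xi(t)=(W-J)\delta(t)$), and both arguments then unroll from $e(0)=0$, bound each forcing term by $2\sqrt{n}\,\epsilon(\tau)$, contract with $\lambda<1$ on the zero-sum subspace, and invoke Chebyshev's sum inequality under \textbf{A.\ref{ass9}} to pass to $\epsilon_{avg}(t)$ with the constant $\frac{2\sqrt{n}}{1-\lambda}$. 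Your identification of the Chebyshev step as the place where monotonicity is indispensable matches the paper exactly.
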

\begin{proof}
Let $e(t) = \hat x(t) - \bar x(t)\ones$ be the vector of errors in each players estimate of $\bar x(t)$, where $\mathbf{1}$ denotes the $n\times 1$ vector of all ones. Let $\bar \delta (t) = \frac{1}{n}\sum_{i} \delta_i(t),~\forall t.$
Using the relation \eqref{update_rule1} and the properties of doubly stochastic matrices, the vector of errors may be written recursively as,\vspace{-3mm}
\begin{align}
e(t+1) & = W \left( e(t) + \xi(t) \right)
\label{thrm4_eq1}
\end{align}
where $\xi(t) = \delta(t) - \bar\delta(t) \ones $.  Note that
\begin{align}
 \vert \xi_i(t) \vert & = \vert \delta_i(t) - \bar\delta_i(t) \vert \leq \vert \delta_i(t) \vert + \vert \bar\delta_i(t) \vert \leq 2\epsilon(t),
 \end{align}
and,
\begin{equation}
\|\xi(t)\|^2 = \sum\limits_{i=1}^n (\xi_i(t))^2 \leq \sum\limits_{i=1}^n 4\epsilon(t)^2 = 4n\epsilon(t)^2.
\label{xi_norm}
\end{equation}

Using \eqref{thrm4_eq1}, the error $e(t)$ can be rewritten as a function of $\xi(t)$ and $e(0)$, that is $e(t+1) = \sum\limits_{r=0}^{t} W^{r+1}\xi(t-r) + W^{t+1}e(0).$
Using this relationship we establish an upper bound on the error,
\begin{align}
\nonumber\|e(t+1) \| = \| \sum\limits_{r=0}^t W^{r+1}\xi(t-r) + W^{t+1}e(0)\| \\ \leq \sum\limits_{r=0}^t \lambda^{r+1} \| \xi(t-r) \|,
\label{thrm5_eq2}
\end{align}
where we have employed assumption \textbf{A.\ref{ass10}}, $e(0) = 0$. Applying \eqref{xi_norm} in \eqref{thrm5_eq2}, we get $\|e(t+1)\| \leq \sum\limits_{r=0}^t \lambda^{r+1} 2\sqrt{n}\epsilon(t-r).$

Recall that $\varepsilon_{avg}(t) = \frac{1}{t}\sum\limits_{\tau=0}^{t-1} \epsilon(\tau)$ is the time average of the sequence $\{\varepsilon(t)\}$ up to time $t$, and note that given our assumptions on $W$, it holds that $\lambda <1$ (see \cite{dimakis2010gossip}).  Note that, by Chebychev's sum inequality \cite{hardy1988inequalities} (p. 43-44),
\begin{align}
\sum\limits_{r=0}^t \lambda^{r+1} 2\sqrt{n}\epsilon(t-r) \leq \sum\limits_{r=0}^t \lambda^{r+1} 2\sqrt{n} \varepsilon_{avg}(t+1),
\end{align}
and hence,
\begin{align}
\|e(t+1)\| & \leq \sum\limits_{r=0}^t \lambda^{r+1} 2\sqrt{n} \varepsilon_{avg}(t+1)\\
& = \left( \lambda\frac{1 - \lambda^{t+1}}{1-\lambda} \right)2\sqrt{n}\epsilon_{avg}(t+1)  \leq \frac{2\sqrt{n}}{1-\lambda}\epsilon_{avg}(t+1),
\end{align}
giving the desired upper bound for the error.
%\begin{equation}
%\| \hat x(t) - \bar x(t)\| = \|e(t)\| \leq \frac{2\sqrt{n}}{1-\lambda}\epsilon_{avg}(t).
%\end{equation}
\end{proof}

\begin{lemma}
Let $\{ a(t) \}_{t\geq1}$ be a distributed ECFP process as defined in section \ref{dist_mfp} (see equations \eqref{W_matrix_eq}-\eqref{dist_ECFP_process2}). Then $\|\hat q_i(t) - \bar q(t) \| = O(\frac{\log{t}}{t})$, where $\bar q(t)$ is the average empirical distribution and $\hat q_i(t)$ is player $i$'s estimate of $\bar q(t)$.
\label{lemma2}
\end{lemma}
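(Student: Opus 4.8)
The plan is to reduce Lemma \ref{lemma2} to the scalar distributed-averaging bound of Lemma \ref{lemma4}, applied coordinate by coordinate. In the notation of the appendix I would identify the node values with the local empirical distributions, $x_i(t)\leftrightarrow q_i(t)$, the instantaneous average $\bar x(t)$ with the empirical centroid $\bar q(t)=\frac1n\sum_i q_i(t)$, and the estimates $\hat x_i(t)$ with player $i$'s running estimate $\hat q_i(t)$. The update rule \eqref{W_matrix_eq} driving distributed ECFP coincides with the averaging recursion \eqref{update_rule1}, so once the hypotheses of Lemma \ref{lemma4} are checked, its conclusion bounds $\|\hat q_i(t)-\bar q(t)\|$ directly. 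Since $q_i(t)\in\mathbb{R}^m$ is vector-valued whereas Lemma \ref{lemma4} is stated for scalar node values, I would run the argument separately on each of the $m$ coordinates and recombine; because $m$ is fixed this costs only a factor $\sqrt m$ and does not change the order of the bound.

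Two hypotheses of Lemma \ref{lemma4} require attention: the zero-initial-error condition \textbf{A.\ref{ass10}} and the monotone increment bound \textbf{A.\ref{ass9}}. To meet the first I would introduce a fictitious time index $t=0$ and set $q_i(0)=0$ and $\hat q_i(0)=0$ for all $i$, so that $\bar x(0)=0$ and the initial error vanishes identically. With this convention the first increment is $\delta_i(0)=q_i(1)-q_i(0)=a_i(1)$, and feeding it through \eqref{update_rule1} returns $\hat q(1)=Wq(1)$, which is exactly the ECFP initialization \eqref{ECFP_initial}; thus the fictitious step is consistent with the stated algorithm. For $t\geq 1$ the increment is $\delta_i(t)=q_i(t+1)-q_i(t)=\frac{1}{t+1}\bigl(a_i(t+1)-q_i(t)\bigr)$, and since $a_i(t+1)\in A_i$ and $q_i(t)\in\Delta_i$, each coordinate of $a_i(t+1)-q_i(t)$ lies in $[-1,1]$. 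Hence every coordinate of $\delta_i(t)$ is bounded by $\epsilon(t):=\frac{1}{t+1}$, a sequence that is monotone non-increasing, so \textbf{A.\ref{ass9}} holds as well.

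It then remains to evaluate the time average $\epsilon_{avg}(t)=\frac1t\sum_{\tau=0}^{t-1}\epsilon(\tau)$ appearing in Lemma \ref{lemma4}. With $\epsilon(\tau)=\frac{1}{\tau+1}$ this is $\frac1t\sum_{j=1}^{t}\frac1j=\frac{H_t}{t}$, where $H_t$ is the $t$-th harmonic number, and since $H_t=O(\log t)$ we get $\epsilon_{avg}(t)=O\bigl(\frac{\log t}{t}\bigr)$. Applying the scalar bound of Lemma \ref{lemma4} to each coordinate $k=1,\ldots,m$, and letting $\hat q^{(k)}(t)\in\mathbb{R}^n$ collect the $k$th coordinates of the players' estimates and $\bar q^{(k)}(t)$ the $k$th coordinate of the centroid, yields $\|\hat q^{(k)}(t)-\bar q^{(k)}(t)\ones\|\leq \frac{2\sqrt n}{1-\lambda}\epsilon_{avg}(t)$. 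Summing the squared coordinate errors gives $\|\hat q_i(t)-\bar q(t)\|\leq \sqrt m\,\frac{2\sqrt n}{1-\lambda}\epsilon_{avg}(t)=O\bigl(\frac{\log t}{t}\bigr)$, as claimed. I expect the only genuine subtlety to be the bookkeeping in the first two paragraphs: verifying that the fictitious $t=0$ initialization simultaneously satisfies \textbf{A.\ref{ass10}} and reproduces the true ECFP start-up rule \eqref{ECFP_initial}, and confirming that the scalar statement of Lemma \ref{lemma4} may legitimately be invoked coordinatewise. The remaining estimates (the increment bound and the harmonic-sum computation) are routine.
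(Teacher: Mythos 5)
Your proposal is correct and follows essentially the same route as the paper's proof: a coordinate-wise application of Lemma \ref{lemma4} with $\epsilon(t)=\frac{1}{t+1}$, the harmonic-sum estimate $\epsilon_{avg}(t)=O\left(\frac{\log t}{t}\right)$, and the fictitious $t=0$ initialization $q_i(0)=\hat q_i(0)=0$ to satisfy the zero-initial-error assumption (a point the paper disposes of in a footnote, which you verify more explicitly by checking consistency with \eqref{ECFP_initial}). The only cosmetic difference is your explicit $\sqrt{m}$ recombination factor, which does not affect the order of the bound.
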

\begin{proof}
We use the second argument, $k$, to index the components of the vector $q_i(t) \in \mathbb{R}^m$. Noting that
\begin{equation}
q_i(t+1) = q_i(t) + \frac{1}{t+1} \left( a_i(t+1) -  q_i(t) \right),
\end{equation}
it follows that the maximum incremental change for any single value in the vector $q_i(t)$ is bounded by $\vert q_i(t+1,k) - q_i(t,k)\vert \leq \frac{1}{t+1} = \varepsilon(t)$, where we let $\epsilon(t) := \frac{1}{t+1}$. Note that the distributed ECFP process \eqref{W_matrix_eq} is updated column-wise (each column corresponds to an action $k$) using an update rule equivalent to \eqref{update_rule1} of Lemma \ref{lemma4}.  Also note that, column-wise, all necessary conditions of Lemma \ref{lemma4} are satisfied,\footnote{The assumption of zero initial error (\textbf{A.\ref{ass10}}) is satisfied since the initialization of $\hat q_i(1)$ in \eqref{ECFP_initial} is equivalent to letting $\bar q_i(0) = 0$, $\hat q_i(0) = 0$ for all $i$ in \eqref{update_eq_1}.} and specifically, we have $\epsilon(t) = \frac{1}{t+1}$. Thus we apply Lemma \ref{lemma4} column-wise to $\hat Q$ and $Q(t)$ of \eqref{W_matrix_eq}, where $x(t)$ of Lemma \ref{lemma4} corresponds to the $k$'th column of $Q(t)$, and $\hat X(t)$ of Lemma \ref{lemma4} corresponds to the $k$'th column of $\hat Q(t)$, and obtain
\begin{equation}
\|\hat{q}(t,k)- \bar q(t,k) \ones \| \leq \frac{2 \sqrt{n}}{1-\lambda}\epsilon_{avg}(t) =  O\left( \frac{\log{t}}{t}\right),
\label{thrm6_eq1}
\end{equation}
where $\hat{q}(t,k)=(\hat{q}_{1}(t,k)~\hat{q}_{2}(t,k)\cdots\hat{q}_{n}(t,k))^{T}$ and $\epsilon_{avg}(t) = \frac{1}{t}\sum_{s=1}^t \frac{1}{t+1} = O\left( \frac{\log{t}}{t}\right)$.  Thus, $ \vert \hat q_i(t,k) - \bar q(t,k) \vert = O\left( \frac{\log{t}}{t} \right),\; k = 1,\ldots,m,\; \forall i$, and hence, $\| \hat q_i(t) - \bar q(t) \| = O(\frac{\log{t}}{t}),~\forall i.$
\end{proof}

\subsection{Intermediate Results}
\begin{lemma}
Suppose the sum $-\infty < \sum\limits_{t=1}^\infty \frac{a_t}{t} = S < \infty$ converges, then $\lim\limits_{T \rightarrow \infty} \frac{a_1 + a_2 + \ldots + a_T}{T} = 0$.
\label{IR1}
\end{lemma}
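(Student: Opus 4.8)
The plan is to recognize this statement as a special case of Kronecker's lemma and to prove it directly via summation by parts (Abel summation), combined with the elementary fact that the Ces\`aro means of a convergent sequence converge to the same limit.

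First I would introduce the partial sums $S_t := \sum_{s=1}^t \frac{a_s}{s}$, with the convention $S_0 := 0$, so that by hypothesis $S_t \to S$ as $t \to \infty$. The key algebraic observation is that $a_t = t\left( S_t - S_{t-1}\right)$, which recasts the quantity of interest, $\sum_{t=1}^T a_t$, into a form amenable to summation by parts, since the increments $a_t/t = S_t - S_{t-1}$ are the summable objects we control.

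Next I would apply Abel summation to $\sum_{t=1}^T t\left(S_t - S_{t-1}\right)$. Using $S_0 = 0$ and the fact that the weights $c_t = t$ have constant increments $c_{t+1} - c_t = 1$, the sum telescopes to
\[
\sum_{t=1}^T a_t = T S_T - \sum_{t=1}^{T-1} S_t,
\]
and dividing by $T$ yields
\[
\frac{1}{T}\sum_{t=1}^T a_t = S_T - \frac{1}{T}\sum_{t=1}^{T-1} S_t.
\]
From here the conclusion follows by taking limits: $S_T \to S$, while $\frac{1}{T}\sum_{t=1}^{T-1} S_t$ is, up to the harmless factor $(T-1)/T \to 1$, the Ces\`aro mean of the convergent sequence $\{S_t\}$, and hence also tends to $S$. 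Therefore the right-hand side converges to $S - S = 0$, as claimed.

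Since every step is either an exact identity or a standard limit, there is no genuine analytic obstacle here; the only point requiring mild care is the Ces\`aro mean step, where one must justify that $\frac{1}{T}\sum_{t=1}^{T-1}S_t \to S$. This is handled by the usual $\varepsilon$-argument: fix $\varepsilon > 0$, choose $N$ beyond which $|S_t - S| < \varepsilon$, split the sum at $N$, and bound the finite initial block by a term of order $O(1/T)$. I expect this routine bookkeeping to be the most delicate part of an otherwise purely mechanical proof.
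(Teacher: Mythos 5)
Your proof is correct, and it takes a genuinely different (more self-contained) route than the paper. The paper disposes of this lemma in a single line by citing Kronecker's lemma from a standard reference: writing $a_t = t\cdot\frac{a_t}{t}$, the convergence of $\sum_t \frac{a_t}{t}$ with weights $b_t = t$ immediately yields $\frac{1}{T}\sum_{t=1}^T a_t \to 0$, and no further argument is given. What you have done is, in effect, reprove Kronecker's lemma in this special case: your Abel-summation identity $\sum_{t=1}^T a_t = T S_T - \sum_{t=1}^{T-1} S_t$ is precisely the summation-by-parts step in the standard proof of Kronecker's lemma with weights $c_t = t$, and the Ces\`aro-mean convergence $\frac{1}{T}\sum_{t=1}^{T-1} S_t \to S$ (handled by the $\varepsilon$-splitting you flag) is the remaining ingredient, giving the limit $S - S = 0$. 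Your algebra checks out: with $S_0 = 0$, one has $\sum_{t=1}^T t(S_t - S_{t-1}) = T S_T + \sum_{t=1}^{T-1} t S_t - \sum_{t=1}^{T-1}(t+1) S_t = T S_T - \sum_{t=1}^{T-1} S_t$, and the factor $\frac{T-1}{T}$ in the Ces\`aro step is indeed harmless since the convergent sequence $\{S_t\}$ is bounded. So the two arguments rest on identical mathematical content; the difference is that the paper treats Kronecker's lemma as a black box while you unpack it, which buys self-containedness (useful if the reader should not need the probability-text citation) at the cost of the routine bookkeeping you correctly identify as the only delicate point.
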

\begin{IEEEproof}
By Kronecker's Lemma \cite{shiryaev1996probability}, $-\infty < \sum\limits_{k=1}^\infty \frac{a_k}{k} = S < \infty \Rightarrow \lim\limits_{T \rightarrow \infty} \frac{1}{T}\sum\limits_{k=1}^T k\frac{a_k}{k} = 0$, which implies that $\lim\limits_{T \rightarrow \infty} \frac{a_1 + \ldots + a_t}{T} = 0.$
\end{IEEEproof}
%\vspace{-4mm}
\begin{lemma}
For $i\in N$, let $\{q_{-i}(t)\}_{t=1}^\infty\in\Delta_{-i}$ and $\{r_{-i}(t)\}_{t=1}^\infty\in\Delta_{-i}$ be sequences such that $\|q_{-i}(t) - r_{-i}(t)\| = O\left(\frac{\log{t}}{t^r} \right),~r>0$.
Let $U(p):\Delta^n\rightarrow\mathbb{R}$ be the (multilinear) mixed utility function defined in \eqref{mixed_U}. Then
\begin{equation}
\vert \max\limits_{p_i \in \Delta_i} U(p_i,q_{-i}(t)) - \max\limits_{p_i \in \Delta_i} U(p_i,r_{-i}(t)) \vert = O\left( \frac{\log{t}}{t^r} \right).
\end{equation}
\label{IR6}
\end{lemma}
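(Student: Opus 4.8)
The plan is to exploit two facts: first, that $U(\cdot)$ is Lipschitz in its opponent arguments with a constant that is \emph{uniform} over $p_i \in \Delta_i$; and second, that maximization over $p_i$ cannot amplify a uniform pointwise gap. Combining these with the hypothesized decay rate on $\|q_{-i}(t) - r_{-i}(t)\|$ then yields the claim directly.

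First I would establish the uniform Lipschitz estimate. Writing $U(p_i, s_{-i}) = \sum_{y} u(y)\, p_i(y_i) \prod_{j \ne i} s_j(y_j)$ as in \eqref{mixed_U}, I observe that for each fixed $p_i$ the map $s_{-i} \mapsto U(p_i, s_{-i})$ is multilinear in the tuple $s_{-i} = (s_j)_{j\ne i}$. Applying a telescoping decomposition of the product $\prod_{j \ne i} q_j(y_j) - \prod_{j \ne i} r_j(y_j)$ and using that every simplex coordinate lies in $[0,1]$ together with $\max_{p \in \Delta^n}|U(p)| < \infty$, I would produce a constant $K$, independent of both $t$ and $p_i$, such that
\begin{equation}
\bigl| U(p_i, q_{-i}(t)) - U(p_i, r_{-i}(t)) \bigr| \le K\, \| q_{-i}(t) - r_{-i}(t) \| \quad \text{for all } p_i \in \Delta_i.
\end{equation}

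Next I would pass from this pointwise bound to a bound on the maxima. For any $p_i \in \Delta_i$, the estimate above gives $U(p_i, q_{-i}(t)) \le \max_{p_i' \in \Delta_i} U(p_i', r_{-i}(t)) + K\|q_{-i}(t) - r_{-i}(t)\|$; taking the maximum over $p_i$ on the left, and then repeating the argument with the roles of $q_{-i}$ and $r_{-i}$ interchanged, shows that $\bigl|\max_{p_i} U(p_i, q_{-i}(t)) - \max_{p_i} U(p_i, r_{-i}(t))\bigr| \le K\|q_{-i}(t) - r_{-i}(t)\|$. Substituting the hypothesis $\|q_{-i}(t) - r_{-i}(t)\| = O(\log t / t^r)$ then delivers the asserted rate for the difference of the maxima.

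The only real subtlety — and hence the point to get right — is the \emph{uniformity} of the Lipschitz constant $K$ over $p_i \in \Delta_i$; once that is secured, the observation that the $\max$ operator is nonexpansive under uniform perturbations is routine. Uniformity holds because $p_i$ enters $U$ only through coordinates bounded by $1$, so it contributes at most a bounded multiplicative factor to the telescoping estimate, while compactness of $\Delta_i$ and $\Delta_{-i}$ guarantees the relevant suprema are finite.
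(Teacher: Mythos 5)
Your proposal is correct and follows essentially the same route as the paper: Lipschitz continuity of the multilinear utility $U$ on the compact domain (which is automatically uniform in $p_i$), followed by the standard nonexpansiveness of the $\max$ operator under a uniform pointwise perturbation. The only cosmetic difference is that the paper runs the second step through the maximizers $p^{*}$ and $p^{**}$ directly, whereas you take suprema of the pointwise inequality; your telescoping argument simply makes explicit the Lipschitz bound the paper asserts from multilinearity.
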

\begin{proof}
Let $\zeta_{-i}' \in \Delta_{-i}$ and $\zeta_{-i}'' \in \Delta_{-i}$.  Let $p^* = \arg\max_{p_i \in \Delta_i} U(p_i,\zeta_{-i}')$ and $p^{**} = \arg\max_{p_i \in \Delta_i} U(p_i,\zeta_{-i}'')$.\footnote{Note that such $p^{\ast}$ and $p^{\ast\ast}$ exist, as $U(\cdot)$ is continuous and the maximization set is compact.}  $U(\cdot)$ is multilinear and is therefore Lipschitz continuous over the domain $\Delta^n$.  Let $K$ be the Lipschitz constant for $U(\cdot)$ such that $\vert U(x) - U(y) \vert \leq K\|x-y\|$ for $x,y \in \Delta^n$. By Lipschitz continuity, it holds that
\begin{align}
\nonumber U(p^*,\zeta_{-i}') & \leq U(p^*,\zeta_{-i}'') + K\|\zeta_{-i}' - \zeta_{-i}'' \| \\
& \leq U(p^{**},\zeta_{-i}'') + K\|\zeta_{-i}' - \zeta_{-i}'' \|,
\label{IR6_redux_eq1}
\end{align}
and thus $U(p^{*},\zeta_{-i}') - U(p^{**},\zeta_{-i}'') \leq K\|\zeta_{-i}' - \zeta_{-i}'' \|$.  By a symmetric argument to \eqref{IR6_redux_eq1}, we also establish $U(p^{**},\zeta_{-i}'') - U(p^{*},\zeta_{-i}') \leq K\|\zeta_{-i}' - \zeta_{-i}'' \|$, thus $\vert U(p^{*},\zeta_{-i}') - U(p^{**},\zeta_{-i}'') \vert \leq K\|\zeta_{-i}' - \zeta_{-i}'' \|.$  It follows that,
\begin{align}
\vert \max\limits_{p_i \in \Delta_i} U(p_i,q_{-i}(t)) - \max\limits_{p_i \in \Delta_i} U(p_i,r_{-i}(t))\vert   \leq K\| q_{-i}(t) - r_{-i}(t) \|,
\end{align}
implying the desired result.
\end{proof}
\vspace{-1em}
\begin{lemma}
Suppose $\vert a_t - b_t \vert = O(\frac{\log{t}}{t^r}),r>0$, $b_t \geq 0$ and $\sum_{t=1}^T \frac{a_t}{t} < \overline{B}$ is bounded above by $\overline{B} \in \mathbb{R}$ for all $T>0$ .  Then $\sum_{t=1}^T \frac{b_t}{t}$ converges as $T \rightarrow \infty$.
\label{IR8}
\end{lemma}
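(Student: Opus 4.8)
The plan is to decompose $\frac{b_t}{t}$ into the already-controlled term $\frac{a_t}{t}$ plus a small remainder, and then to exploit the nonnegativity of $b_t$ to upgrade an upper bound into genuine convergence. Concretely, I would set $c_t := b_t - a_t$, so that $\frac{b_t}{t} = \frac{a_t}{t} + \frac{c_t}{t}$ for every $t$.

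First I would show that $\sum_t \frac{c_t}{t}$ converges absolutely. By hypothesis there is a constant $C$ with $|c_t| \le C\,\frac{\log t}{t^r}$ for all sufficiently large $t$, hence $\frac{|c_t|}{t} \le C\,\frac{\log t}{t^{1+r}}$. Since $r>0$, the series $\sum_t \frac{\log t}{t^{1+r}}$ converges (for instance by the integral test, or by noting that $\log t = o(t^{r/2})$, so the summand is $o(t^{-(1+r/2)})$, which is summable). Therefore $\sum_t \frac{c_t}{t}$ converges absolutely; in particular its partial sums are uniformly bounded, say $\big|\sum_{t=1}^T \frac{c_t}{t}\big| \le D$ for all $T$.

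Next I would combine this with the hypothesis on $a_t$. Writing $S_T := \sum_{t=1}^T \frac{b_t}{t}$, the decomposition gives $S_T = \sum_{t=1}^T \frac{a_t}{t} + \sum_{t=1}^T \frac{c_t}{t} \le \overline{B} + D$ for every $T$, using the assumed bound $\sum_{t=1}^T \frac{a_t}{t} < \overline{B}$ together with the uniform bound $D$ just established. Thus the sequence $\{S_T\}_{T\ge 1}$ is bounded above. Note that only an \emph{upper} bound on $\sum_{t=1}^T \frac{a_t}{t}$ is available, but this is exactly what is needed.

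Finally, the nonnegativity assumption $b_t \ge 0$ makes each term $\frac{b_t}{t} \ge 0$, so $\{S_T\}$ is monotone non-decreasing; a monotone non-decreasing sequence that is bounded above converges, which yields the claim. I do not anticipate a genuine obstacle: the one point requiring care is the summability of $\sum_t \frac{\log t}{t^{1+r}}$, which is precisely where the hypothesis $r>0$ is used and which guarantees that the remainder series $\sum_t \frac{c_t}{t}$ is harmless.
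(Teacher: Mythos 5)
Your proof is correct and takes essentially the same approach as the paper's: decompose $\frac{b_t}{t}$ as $\frac{a_t}{t}$ plus a remainder controlled by the hypothesis $\vert a_t - b_t\vert = O\left(\frac{\log t}{t^r}\right)$, note that $\sum_t \frac{\log t}{t^{1+r}}$ is summable for $r>0$, and conclude by monotonicity (from $b_t \geq 0$) together with the resulting uniform upper bound on the partial sums. The only cosmetic difference is that the paper uses the one-sided positive part $\delta_t = \max(b_t - a_t, 0)$ of the difference, whereas you establish absolute convergence of the full remainder series; both yield the same bound and the arguments are otherwise identical.
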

\begin{proof}
Let{\small
\begin{equation}
\delta_t :=
\begin{cases}
b_t - a_t & \mbox{if } b_t > a_t\\
0 & \mbox{otherwise.}
\end{cases}
\end{equation}
}
It follows that $\delta_t \geq 0$ and $b_t \leq a_t + \delta_t$. By hypothesis, $ \vert a_t - b_t \vert = O(\frac{\log{t}}{t^r})$, which implies that $\delta_t = O(\frac{\log{t}}{t^r})$. It follows that,
{\small
\begin{align}
\sum\limits_{t=1}^T \frac{b_t}{t} & \leq \sum\limits_{t=1}^T \frac{a_t + \delta_t}{t} = \sum\limits_{t=1}^T \frac{a_t}{t} + \sum\limits_{t=1}^T \frac{\delta_t}{t}.
\end{align}}
Since $\sum\limits_{t=1}^\infty \frac{a_t}{t} < \overline{B}$ is bounded above, $\sum\limits_{t=1}^\infty \frac{\delta_t}{t} < \infty$ converges, and $b_t\geq 0$, it follows that $\sum\limits_{t=1}^T \frac{b_t}{t} < \infty$ converges as $T \rightarrow \infty$.
\end{proof}
\vspace{-1em}
\begin{lemma}
Let $a_t = \sum\limits_{i=1}^n \left[v_i^m(\bar q(t)) - U(\bar q^n(t))\right]$, then $\lim\limits_{T\rightarrow \infty} \frac{a_1 + \ldots + a_T}{T} = 0$ implies that, for every $\varepsilon > 0$, $$\lim\limits_{T\rightarrow \infty} \frac{\#\{1\leq t \leq T: \bar q^n(t) \notin C_\varepsilon \}}{T} = 0.$$
\label{IR4}
\end{lemma}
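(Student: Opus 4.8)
The plan is to view $a_t$ as a sum of nonnegative per-player equilibrium deficiencies and then convert the vanishing of its Ces\`aro average into a bound on the frequency of times at which $\bar q^n(t) \notin C_\varepsilon$, via a Markov-type counting inequality. Write $a_t^{(i)} := v_i^m(\bar q(t)) - U(\bar q^n(t))$, so that $a_t = \sum_{i=1}^n a_t^{(i)}$. Exactly as in the proof of Theorem \ref{thrm2} (where the analogous quantity $\beta_t$ is shown to be nonnegative), each term satisfies $a_t^{(i)} \geq 0$: since $U$ is linear in player $i$'s argument and $\Delta_i$ is the convex hull of $A_i$, we have $v_i^m(\bar q(t)) = \max_{\alpha_i \in A_i} U(\alpha_i,\bar q_{-i}(t)) = \max_{g_i \in \Delta_i} U(g_i,\bar q_{-i}(t)) \geq U(\bar q(t),\bar q_{-i}(t)) = U(\bar q^n(t))$.

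Next I would characterize $C_\varepsilon$-membership through these same terms. The profile $\bar q^n(t)$ is a consensus profile by construction, so $\bar q^n(t) \in C_\varepsilon$ holds precisely when the $\varepsilon$-Nash condition is met, i.e. $U(\bar q^n(t)) \geq U(g_i,\bar q_{-i}(t)) - \varepsilon$ for all $g_i \in \Delta_i$ and all $i$. By the reduction of the mixed maximum to the pure maximum noted above, this is equivalent to $a_t^{(i)} \leq \varepsilon$ for every $i$. Hence, if $\bar q^n(t) \notin C_\varepsilon$, then $a_t^{(i)} > \varepsilon$ for some $i$, and since every other summand is nonnegative, $a_t \geq a_t^{(i)} > \varepsilon$. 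This yields the one-sided inclusion
\[
\{1 \leq t \leq T : \bar q^n(t) \notin C_\varepsilon\} \subseteq \{1 \leq t \leq T : a_t > \varepsilon\}.
\]

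It then remains to bound the density of the larger set. Using $a_t \geq 0$,
\[
\frac{1}{T}\sum_{t=1}^T a_t \;\geq\; \frac{\varepsilon}{T}\,\#\{1 \leq t \leq T : a_t > \varepsilon\},
\]
whence $\#\{1 \leq t \leq T : a_t > \varepsilon\}/T \leq \varepsilon^{-1} T^{-1}\sum_{t=1}^T a_t$. By hypothesis the right-hand side tends to $0$ as $T \to \infty$, and combining with the inclusion above gives $\#\{1 \leq t \leq T : \bar q^n(t) \notin C_\varepsilon\}/T \to 0$, which is the claim. I do not anticipate a genuine obstacle: the heart of the argument is a single-sided Markov inequality for the nonnegative sequence $\{a_t\}$. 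The only step demanding care is the translation between $a_t$ and $C_\varepsilon$-membership --- in particular, noting that a single violated player alone forces $a_t > \varepsilon$ (which is precisely what nonnegativity of the remaining summands supplies), and confirming that the pure-strategy best response defining $v_i^m$ attains the same value as the mixed-strategy maximum in the definition of $C_\varepsilon$.
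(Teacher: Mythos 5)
Your proof is correct and takes essentially the same route as the paper's: both reduce the claim to a Markov-type counting inequality for the nonnegative sequence $\{a_t\}$, after translating $C_\varepsilon$-membership of the (automatically consensus) profile $\bar q^n(t)$ into a bound on the per-player deficiencies, with the pure-to-mixed best-response reduction handled by multilinearity. The only cosmetic difference is that the paper invokes permutation invariance to obtain the two-sided equivalence $\bar q^n(t)\notin C_\varepsilon \Leftrightarrow a_t \geq n\varepsilon$ and thresholds at $n\varepsilon$, whereas you use only nonnegativity of the summands to get the one-sided inclusion at threshold $\varepsilon$, which is all the limit argument needs.
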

\begin{IEEEproof}
Let $\varepsilon > 0$ be given.  By definition,
\begin{equation}
\bar q^n(t) \in C_\varepsilon \Leftrightarrow v_i^m(\bar q(t)) - U(\bar q^n(t)) < \varepsilon ~\forall i.
\label{IR4_eq1}
\end{equation}
The utility function $U(\cdot)$ is assumed to be permutation invariant for all players, so an equivalent statement to \eqref{IR4_eq1} is,
\begin{equation}
\bar q^n(t) \in C_\varepsilon \Leftrightarrow \sum\limits_{i=1}^n \left[v_i^m(\bar q(t)) - U(\bar q^n(t))\right] < n\varepsilon.
\end{equation}
Let
$$
b_t =
\begin{cases}
1, & \mbox{ if $a_t \geq n\varepsilon$}\\
0, & \mbox{ otherwise.}
\end{cases}
$$
Note that $b_t = 0 \Leftrightarrow \bar q^n(t) \in C_\varepsilon$ and $b_t = 1 \Leftrightarrow \bar q^n(t) \notin C_\varepsilon$, thus
\begin{equation}
\frac{\#\{1\leq t \leq T: \bar q^n(t) \notin C_\varepsilon \}}{T} = \frac{b_1+\ldots +b_T}{T}.
\end{equation}
Note also that $a_t \geq 0$. Clearly, $$ \frac{b_1 + \ldots + b_T}{T} \leq \frac{1}{n\varepsilon}\frac {a_1 + \ldots + a_T}{T},$$
implying $\lim\limits_{T \rightarrow \infty} \frac{b_1 + \ldots + b_T}{T} = 0$,
from which the desired result follows.
\end{IEEEproof}
%\vspace{-1em}
\begin{lemma}
$\lim\limits_{T\rightarrow \infty} \frac{\#\{1\leq t \leq T :\bar q^n(t) \notin C_\varepsilon  \}}{T} = 0$ for all $\varepsilon > 0$ implies that $\lim\limits_{T\rightarrow \infty} \frac{\#\{1\leq t \leq T :\bar q^n(t) \notin B_\delta(C)  \}}{T} = 0$ for all $\delta > 0$.
\label{IR7}
\end{lemma}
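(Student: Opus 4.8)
The plan is to reduce the statement to a purely set-theoretic containment between the $\varepsilon$-consensus equilibria $C_\varepsilon$ and the $\delta$-neighborhood $B_\delta(C)$ of the consensus equilibria, and then transfer the hypothesis through an elementary counting inequality. The key claim I would establish is that for every $\delta > 0$ there exists $\varepsilon > 0$ such that $C_\varepsilon \subseteq B_\delta(C)$. Intuitively, this says that points which are \emph{approximate} consensus equilibria (in the payoff sense) must be \emph{geometrically close} to the set of exact consensus equilibria, once the payoff tolerance is small enough.

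Granting the claim, the argument is immediate. Note that $\bar q^n(t)$ always lies on the consensus diagonal $\{p : p_1 = \cdots = p_n\}$ by construction, as does every point of both $C_\varepsilon$ and $B_\delta(C)$, so the inclusion is between comparable sets and its contrapositive gives $\bar q^n(t) \notin B_\delta(C) \Rightarrow \bar q^n(t) \notin C_\varepsilon$. Hence
\[
\#\{1 \le t \le T : \bar q^n(t) \notin B_\delta(C)\} \le \#\{1 \le t \le T : \bar q^n(t) \notin C_\varepsilon\}.
\]
Dividing by $T$ and letting $T \to \infty$, the right-hand side tends to $0$ by hypothesis, forcing the left-hand side to $0$ as well. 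Since $\delta > 0$ was arbitrary, this is exactly the desired conclusion.

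The main work, and the main obstacle, is proving the claim, which is an upper-semicontinuity property of the $\varepsilon$-equilibrium correspondence restricted to the diagonal. I would argue by contradiction: suppose for some $\delta > 0$ the claim fails, so that for each $k$ there is a point $p^{(k)} \in C_{1/k}$ with $d(p^{(k)}, C) \ge \delta$. Every $p^{(k)}$ lies in the intersection of $\Delta^n$ with the consensus diagonal, a compact set, so along a subsequence $p^{(k_j)} \to p^\ast$ with $p^\ast$ on the diagonal. For each $i$ and each $g_i \in \Delta_i$ the defining inequality $U_i(p^{(k_j)}) \ge U_i(g_i, p^{(k_j)}_{-i}) - 1/k_j$ holds; since $U_i(\cdot)$ is continuous (indeed multilinear), passing to the limit yields $U_i(p^\ast) \ge U_i(g_i, p^\ast_{-i})$ for all $g_i$ and $i$, so $p^\ast \in NE$, and combined with the diagonal constraint $p^\ast \in C$. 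But $d(\cdot, C)$ is continuous, so $d(p^\ast, C) = \lim_j d(p^{(k_j)}, C) \ge \delta > 0$, contradicting $p^\ast \in C$. This establishes the claim and completes the proof. The only delicate points are ensuring the limit point $p^\ast$ inherits the exact consensus constraint (which holds because the diagonal is closed) and the exact Nash condition (which follows from continuity of the $U_i$ and the vanishing of the slack $1/k_j$); both are routine given the compactness of $\Delta^n$.
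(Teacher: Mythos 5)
Your proof is correct and takes essentially the same route as the paper's: both reduce the statement to the existence, for each $\delta>0$, of an $\varepsilon'>0$ with $\bar q^n(t)\notin B_\delta(C)\Rightarrow \bar q^n(t)\notin C_{\varepsilon'}$ (equivalently, $C_{\varepsilon'}\subseteq B_\delta(C)$ on the consensus diagonal), and then transfer the hypothesis through the elementary counting inequality. The only difference is that the paper simply asserts the existence of such an $\varepsilon'$, whereas you supply the compactness and continuity argument (diagonal sequence $p^{(k)}\in C_{1/k}$ with $d(p^{(k)},C)\geq\delta$, limit point in $C$, contradiction) that justifies it --- a step the paper leaves implicit.
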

\begin{IEEEproof}
Suppose $\lim\limits_{T\rightarrow \infty} \frac{\#\{1\leq t \leq T :\bar q^n(t) \notin C_\varepsilon  \}}{T} = 0$
for all $\varepsilon > 0$, but there exists some $\delta > 0$ such that $$\limsup\limits_{T\rightarrow \infty} \frac{\#\{1\leq t \leq T :\bar q^n(t) \notin B_\delta(C)  \}}{T} = \alpha > 0.$$
%By Lemma \ref{IR10},
Then there exists an $\varepsilon' > 0$ such that $$\bar q^n(t) \notin B_\delta(C) \Rightarrow \bar q^n(t) \notin C_{\varepsilon'},$$  which implies that
\begin{align}
 \#\{ 1\leq t \leq T : \bar q^n(t) \notin C_{\varepsilon'} \} \geq \#\{ 1\leq t \leq T : \bar q^n(t) \notin B_\delta(C) \}.
\end{align}
This implies that $$\limsup\limits_{T\rightarrow \infty} \frac{\#\{1\leq t \leq T :\bar q^n(t) \notin C_{\varepsilon'}  \}}{T} \geq \alpha$$ for some $\varepsilon'>0$, a contradiction.
\end{IEEEproof}

\begin{lemma}
$\lim\limits_{T\rightarrow\infty} \frac{\#\{1\leq t \leq T:\bar q^n(t) \notin B_\delta(C)\}}{T}=0$ for all $\delta > 0$ implies  $\lim\limits_{t \rightarrow \infty} d(\bar q^n(t),C) = 0$.
\label{IR3}
\end{lemma}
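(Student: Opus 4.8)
The plan is to argue by contradiction, exploiting the fact that the trajectory $\bar q^n(t)$ moves slowly---its increments are of order $1/t$---so that a single time at which $\bar q^n(t)$ is far from $C$ forces an entire window of nearby times to also be far from $C$, and the length of that window grows linearly in $t$. This in turn produces a positive lower density of ``bad'' times, contradicting the hypothesis.

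First I would record the slow-variation estimate. Since $\bar q^n(t+1) = \bar q^n(t) + \frac{1}{t+1}\left(\bar a^n(t+1) - \bar q^n(t)\right)$ and both $\bar a^n(t+1)$ and $\bar q^n(t)$ lie in the bounded set $\Delta^n$, there is a constant $C_0$ with $\|\bar q^n(t+1) - \bar q^n(t)\| \leq \frac{C_0}{t+1}$. Writing $r(t) := d(\bar q^n(t),C)$ and using that $d(\cdot,C)$ is $1$-Lipschitz (and that $C$ is nonempty by \textbf{A.\ref{ass1}}--\textbf{A.\ref{ass2}}), we get $|r(t+1)-r(t)| \leq \frac{C_0}{t+1}$. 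I would also note that because every component of $\bar q^n(t)$ equals $\bar q(t)$, the consensus constraint in the definition of $B_\delta(C)$ is automatically satisfied, so $\bar q^n(t)\notin B_\delta(C)$ is equivalent to $r(t)\geq\delta$; thus the hypothesis reads $\frac{1}{T}\#\{t\leq T: r(t)\geq\delta\}\to 0$ for every $\delta>0$.

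Next, suppose for contradiction that $r(t)\not\to 0$. Then there is $\eta>0$ and a sequence $t_k\to\infty$ with $r(t_k)\geq\eta$. Using the increment bound, for $j\geq 0$ we have $r(t_k+j) \geq r(t_k) - \sum_{s=t_k}^{t_k+j-1}\frac{C_0}{s+1} \geq \eta - C_0\log\!\big(\tfrac{t_k+j}{t_k}\big)$. Hence $r(t_k+j)\geq\eta/2$ as long as $C_0\log\!\big(1 + \tfrac{j}{t_k}\big)\leq\eta/2$, i.e. for all $0\leq j\leq \gamma t_k$ with $\gamma := e^{\eta/(2C_0)}-1 > 0$. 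Thus each $t_k$ is the left endpoint of a window of length $\lfloor\gamma t_k\rfloor$ on which $r(t)\geq\eta/2$.

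Finally I would convert this into a density statement. Taking $\delta=\eta/2$ and $T = t_k + \lfloor\gamma t_k\rfloor$, the window just constructed shows $\#\{t\leq T: r(t)\geq\eta/2\} \geq \gamma t_k$, so the density is at least $\frac{\gamma t_k}{(1+\gamma)t_k} = \frac{\gamma}{1+\gamma} > 0$. Since $t_k\to\infty$, this inequality holds for arbitrarily large $T$, contradicting $\frac{1}{T}\#\{t\leq T: r(t)\geq\eta/2\}\to 0$, and the claim follows. The main obstacle is the slow-variation bookkeeping: one must size the window correctly, and the logarithmic (harmonic) growth of $\sum 1/s$ is precisely what makes the window length scale linearly in $t_k$, which is exactly what is needed to extract a positive density and close the contradiction.
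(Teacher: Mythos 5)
Your proof is correct and takes essentially the approach the paper intends: the paper omits the proof of Lemma \ref{IR3}, deferring to Lemma 1 of \cite{Mond01}, whose argument is precisely this slow-variation contradiction---increments of order $1/t$ force a window of ``bad'' times of length proportional to $t_k$ (via the harmonic sum's logarithmic growth), yielding a positive lower density that contradicts the hypothesis. Your observation that the consensus constraint in $B_\delta(C)$ is automatic for $\bar q^n(t)$, so that exclusion from $B_\delta(C)$ reduces to $d(\bar q^n(t),C)\geq\delta$, is a correct and necessary bookkeeping step.
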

The proof of this result closely follows the proof of \cite{Mond01}, Lemma 1, and is omitted here for brevity.
}
\bibliographystyle{IEEEtran}
\bibliography{myRefs}

\end{document}